\theoremstyle{plain}
 \newtheorem{thm}{Theorem}[section]
\newtheorem{thm*}{Theorem}
 \newtheorem{lem}[thm]{Lemma}
 \newtheorem{prop}[thm]{Proposition}
 \newtheorem{cor}[thm]{Corollary}
 \numberwithin{equation}{section} 
\numberwithin{figure}{section} 
 \theoremstyle{plain}
 \theoremstyle{definition}
 \newtheorem{rem}[thm]{Remark}
\newcommand{\cA}{{{\mathcal A}}}
\newcommand{\cM}{{{\mathcal M}}}
\newcommand{\C}{{{\mathbb C}}}
\newcommand{\bO}{{{\mathbb O}}}
\newcommand{\K}{{{\mathbb K}}}
\newcommand{\R}{{{\mathbb R}}}
\newcommand{\bH}{{{\bf H}}}
\newcommand{\mH}{{{\mathbb H}}}
\newcommand{\cC}{{{\mathcal C}}}
\newcommand{\fp}{{{\mathfrak p}}}
\newcommand{\rF}{{{\rm F}}}
\newcommand{\X}{{{\mathbb X}}}
\newcommand{\fH}{{{\mathfrak H}}}
\begin{document}

\title[M\"obius  rigidity of invariant metrics]{M\"obius  rigidity of invariant metrics in boundaries of symmetric spaces of rank-1}

\author[I.D. Platis \& V. Schroeder]{I.D. Platis \& V. Schroeder}

\email{jplatis@math.uoc.gr, viktor.schroeder@math.uzh.ch}
\address{Department of Mathematics and Applied Mathematics\\ 
University of Crete\\
University Campus\\
GR 700 13 Voutes\\ Heraklion Crete\\Greece}

\bigskip

\address{Institut f\"ur Mathematik\\
Universit\"at Z\"urich\\
Winterthurerstrasse 190\\
CH 8057 Z\"urich\\Switzerland}

\begin{abstract}
Let $\bH^n_\K$ denote the symmetric space of rank-1 and of non-compact type and let $d_\fH$ be the Kor\'anyi metric defined on its boundary. We prove that if $d$ is a metric on $\partial\bH^n_\K$ such that all Heisenberg similarities are $d$-M\"obius maps, then under a topological condition $d$ is a constant multiple of a power of  $d_\fH$. 

\end{abstract}

\date{{17 March 2016}\\
{\it 2010 Mathematics Subject Classifications.} 51B10, 51F99.
\\ \it{Key words.} M\"obius space, invariant metric, boundary of symmetric space.}

\maketitle

\section{Introduction and Statement of Results}
Let $(S, d)$ be a metric space and suppose that there exists a remote point $\infty$ such that $\overline{S}=S\cup\{\infty\}$ is compact. We may extend $d$ to the compactification by agreeing  that $d(p,\infty)=+\infty$ for every $p\in S$ and also  $d(\infty,\infty)=0$. A natural metric cross-ratio $|\X^d|$ is defined for each quadruple of pairwise distinct points $\fp=(p_1,p_2,p_3,p_4)$ by setting
$$
|\X^d|(\fp)=\frac{d(p_4,p_2)}{d(p_4,p_1)}\cdot\frac{d(p_3,p_1)}{d(p_3,p_2)},
$$
with obvious modifications when one of the points is $\infty$. The {\it M\"obius group} $\cM_d=\cM_d(S)$ is the group of homeomorphisms of $\overline{S}$ which leave $|\X^d|$ invariant for each quadruple $\fp$ of pairwise distinct points of $\overline{S}$. The {\it similarity group} (or {\it homothety group}) ${\rm Sim}_d={\rm Sim}_d(S)$ is the subgroup of $\cM_d$ comprising homeomorphisms $\phi$ such that there exists a positive constant  $K(\phi)$ which satisfies $d\left(\phi(p),\phi(q)\right)=K(\phi)\cdot d(p,q),$ for all $p,q\in S$. If $K(\phi)=1$ then $\phi$ is an {\it isometry}. 

Given two metrics $d_1$ and $d_2$ on $S$ for which $\infty$ is their remote point, we say that they define the same {\it M\"obius structure} on $S$ if $|\X^{d_1}|=|\X^{d_2}|$. This happens if and only if $d_1$ and $d_2$ are homothetic, that is, there exists a $c>0$ such that $d_1=c\cdot d_2$ (see Lemma 2.1 of \cite{BS}). It is immediate that for such metrics we have $\cM_{d_1}=\cM_{d_2}$. 

In this paper we are concerned with the following problem: Suppose that we are given a representative $d_1$ of a M\"obius structure on $\overline{S}$ and suppose also that there exists a metric $d_2$ defined on $S$ which has the same remote point as $d_1$ and also satisfies $\cM_{d_1}\subseteq \cM_{d_2}$. Then, assuming that $\cM_{d_1}$ is sufficiently rich, what can we say about the relation of $d_1$ and $d_2$? 

We give a quite precise answer to this question in the case where $\overline{S}$ is the boundary of a symmetric space of rank-1 and of non-compact type and the given M\"obius structure is the canonical one, i.e., the one arising from the Kor\'anyi metric.  We recall below some known facts about the aforementioned notions; for details we refer to Section \ref{sec:prel}.
Let $\K=\R,\C,\mH$ or $\bO$ be the set of real, complex, quaternionic and octonionic numbers, respectively. With $\bH^n_\K$ we shall thereafter denote the $n$-dimensional $\K$-hyperbolic space (in the case $\K=\bO$, $n=2$). Symmetric spaces of rank-1 and of non-compact type are necessarily the $\K$-hyperbolic spaces. There is a natural conjugation $z\mapsto\overline{z}$ in $\K$ (in the case when $\K=\R$ this is of course the identity map); denote by $\Im(\K)$ the subset of $\K$ comprising points $z$ such that $z=-\overline{z}$. The boundary $\partial\bH^n_\K$ is a sphere, isomorphic to the one point compactification of the $\K$-{\it Heisenberg group} $\fH_\K$: That is, the set $\K^{n-1}\times\Im(\K)$ endowed with the group multiplication
$$
(\zeta,v)*(\zeta',v')=\left(\zeta+\zeta',v+v+2\omega(\zeta,\zeta')\right),
$$
for each $(\zeta,v),(\zeta',v')\in\fH_\K$. Here, $\omega$ is the usual symplectic form in $\K^{n-1}$. A natural conjugation $J$ is defined on $\K^{n-1}\times\Im(\K)$ as the restriction of the conjugation of $\bH^n_\K$; that is,  $(\zeta,v)\mapsto (\overline{\zeta},-v)$. There is a gauge $|\cdot|_\K$ on $\fH_\K$, called the Kor\'anyi gauge, which is given for each $(\zeta,v)$ by
$$
|(\zeta,v)|_\K=|-\|\zeta\|^2+v|^{1/2}=\left(\|\zeta\|^4+|v|^2\right)^{1/4}.
$$
The Kor\'anyi (or, Kor\'anyi-Cygan) metric $d_\fH$ is then defined in $\fH_\K$ by
\begin{equation}\label{eq:Kor}
d_\fH\left((\zeta,v)\;,\;(\zeta',v')\right)=\left|(\zeta',v')^{-1}*(\zeta,v)\right|_\K,
\end{equation}
for each $(\zeta,v),(\zeta',v')\in\fH_\K$. 
The canonical M\"obius structure of $\partial\bH^n_\K$ is the one arising from $d_\fH$. The M\"obius group $\cM_{d_\fH}$ of the metric $d_\fH$ is generated by the similarities of $d_\fH$ and an inversion $I$ defined on $\partial\bH^n_\K$, see Section \ref{sec:prel} for details.

One of our main results can be stated in the following way:

\medskip

\noindent{\bf Theorem.} {\it Let $d$ be a metric on $\partial\bH^n_\K$, which induces the same topology and has the same remote point as $d_\fH$. Assume further that $\cM_d=\cM_{d_\fH}$. Then there exist $\alpha\in(0,1]$ and $\beta>0$ such that $d=\beta\cdot d^\alpha_\fH$.}

\medskip

This special result is embedded in much more general statements: Suppose that we  are given a metric $d$ in $\partial\bH^n_\K$  such that:  

\medskip

\begin{enumerate}
 \item [{{\bf (Sim)}}] ${\rm Sim}_{d_\fH}\subseteq{\rm Sim}_d$.
\end{enumerate}

\medskip

We shall further presuppose the next quite plausible topological condition:

\medskip

\begin{enumerate}
 \item  [{{\bf (Top)}}] 
 If a sequence of points $p_\nu\in\partial\bH^n_\K$, $\nu\in\mathbb{N}$, is converging in the $d_\fH$-topology of $\partial\bH^n_\K$, then it converges in the $d$-topology of $\partial\bH^n_\K$.
\end{enumerate}

\medskip

Before stating our results, we  list the following set of conditions:

\medskip

\begin{enumerate}
  \item  [{{\bf (Inv)}}] Inversion $I$ is  in $\cM_d$.
  \item [{{\bf ($\alpha$-H\"ol)}}] There exists a constant $\alpha\in (0,1]$ such that the identity map of $(\partial\bH^n_\K, d_\fH)\to(\partial\bH^n_\K, d)$ is H\"older continuous with exponent $\alpha$. Explicitly, there exists a $\beta>0$ such that for each $p,q\in\partial\bH^n_\K$, 
  $$
  d(p,q)\le\beta\cdot d_\fH^\alpha(p,q).
  $$
  \item [{{\bf (G)}}] There exists an $\alpha\in (0,1]$ such that for each $p=(\zeta,v)\in\fH_\K$, $\K\neq\R$,
  \begin{equation*}
   d^{4/\alpha}(o,p)=d^{4/\alpha}\left(o,\Pi_{\K^{n-1}}(p)\right)+d^{4/\alpha}\left(\Pi_{\K^{n-1}}(p),\Pi_{\Im(\K)}(p)\right).
  \end{equation*}
Here, $o=({\bf 0}_{\K^{n-1}},{\bf 0}_{\Im(\K)})$ is the origin of $\fH_\K$, where ${\bf 0}_{\K^{n-1}}$ and ${\bf 0}_{\Im(\K)}$ are the origins of $\K^{n-1}$ and $\Im(\K)$, respectively. Moreover,  $\Pi_{\K^{n-1}}$,  $\Pi_{\Im(\K)}$ are defined respectively by $\Pi_{\K^{n-1}}(p)=\zeta$ and $\Pi_{\Im(\K)}(p)=v$.
\item [{{\bf (Eq)}}] The metric $d$ satisfies $$d\left(o,(1,0)\right)=d\left(o,(0,1)\right).$$ Here, $(1,0)=({\bf e}_1,{\bf 0}_{\Im(\K)})$, where ${\bf e}_1=(1,0,\dots,0)$ is the first coordinate vector of $\K^{n-1}$ and $(0,1)=({\bf 0}_{\K^{n-1}},{\bf f}_1)$ where ${\bf f}_1$ is the first coordinate vector of $\Im(\K)$.
\item [{{\bf (biLip)}}] The metric $d$ is bi-Lipschitz equivalent to $d_\fH^\alpha$ for some $\alpha\in (0,1]$.
\item [{{\bf ($\alpha$-Met)}}]  There exist  constants $\beta>0$ and  $\alpha\in(0,1]$ such that $d=\beta\cdot d^\alpha_\fH$.
\end{enumerate}

\medskip

Our first theorem assumes the least number of conditions for $d$, that is, {\bf (Sim)}  and {\bf (Top)}.

\begin{thm}\label{thm:main1}
Let $d$ be a metric in $\fH_\K$ and denote again by $d$ its extension to $\partial\bH^n_\K=\fH\cup\{\infty\}$. 
Then the following hold:
\begin{enumerate}
\item If $\K=\R$, conditions {\bf (Sim)} and {\bf (Top)} together imply {\bf ($\alpha$-Met)}.
\item If $\K\neq\R$, conditions {\bf (Sim)} and {\bf (Top)}  together imply {\bf ($\alpha$-H\"ol)}.
\end{enumerate} 
\end{thm}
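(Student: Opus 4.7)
The plan is to analyze how \textbf{(Sim)} constrains the $d$-action of three distinguished subfamilies of ${\rm Sim}_{d_\fH}$: the left translations $T_g(p)=g*p$ and the rotations $R_U$ fixing the origin, both $d_\fH$-isometries, together with the Kor\'anyi dilations $D_\lambda\colon(\zeta,v)\mapsto(\lambda\zeta,\lambda^2 v)$, which are $d_\fH$-similarities of ratio $\lambda$. By \textbf{(Sim)} they are $d$-similarities with ratios $K(g),\,M(U),\,L(\lambda)>0$, and since similarity constants multiply under composition, $K$, $M$, $L$ are group homomorphisms into the abelian group $(\R^+,\cdot)$. Condition \textbf{(Top)} makes the identity $(\fH_\K,d_\fH)\to(\fH_\K,d)$ continuous, whence $d$ itself is continuous on $\fH_\K\times\fH_\K$ in the product $d_\fH$-topology; combined with the $d_\fH$-continuity of left multiplication and of $\lambda\mapsto D_\lambda$, the formula $K(g)=d(g*p,g*q)/d(p,q)$ for a fixed pair $p\neq q$ shows $K$ is $d_\fH$-continuous, and analogously so are $M$ and $L$.

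\smallskip

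The next step trivializes $K$ and $M$ and identifies $L$. Since $\R^+$ is abelian, $K$ vanishes on the commutator $[\fH_\K,\fH_\K]=\{{\bf 0}_{\K^{n-1}}\}\times\Im(\K)$, so it descends to a continuous homomorphism on the real vector space $\K^{n-1}$, necessarily of the form $\zeta\mapsto e^{\ell(\zeta)}$ for some real-linear $\ell$. The conjugation identity $R_U\circ T_g\circ R_U^{-1}=T_{R_U(g)}$ yields $K\circ R_U=K$, and transitivity of the rotation group on $\K$-spheres forces $\ell\equiv 0$, so $K\equiv 1$. For $M$, the rotation group is compact and any continuous homomorphism from a compact group into $\R^+$ is trivial, so $M\equiv 1$. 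For $L$, continuity and multiplicativity give $L(\lambda)=\lambda^\alpha$ for some $\alpha\in\R$; since $D_\lambda(p)\to o$ in $d_\fH$ as $\lambda\to 0^+$, condition \textbf{(Top)} gives $L(\lambda)d(p,q)\to 0$, hence $\alpha>0$; and applying the triangle inequality for $d$ to the collinear triple $o,({\bf e}_1,0),(2{\bf e}_1,0)$ (using translation-invariance together with $L(2)=2^\alpha$) forces $2^\alpha\le 2$, so $\alpha\in(0,1]$.

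\smallskip

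With translations and rotations now $d$-isometries and dilations acting by $\lambda^\alpha$, the two cases diverge. If $\K=\R$, then $\fH_\R=\R^{n-1}$ with $d_\fH$ Euclidean; joint invariance of $d$ under translations and $O(n-1)$-rotations about $o$ makes $d(o,p)$ depend only on $\|p\|$, and the dilation scaling relation yields $d(o,p)=\beta\|p\|^\alpha$ with $\beta:=d(o,({\bf e}_1,0))$, so $d=\beta\cdot d^\alpha_\fH$, which is \textbf{($\alpha$-Met)}. If $\K\neq\R$, the rotation action fails to be $2$-point transitive and equality cannot be asserted; instead, continuity of $d$ in the $d_\fH$-topology together with compactness of the Kor\'anyi unit sphere $\{p:d_\fH(o,p)=1\}\subset\K^{n-1}\times\Im(\K)$ produces a uniform bound $d(o,p)\le C$ on it. Writing any $p\neq o$ as $D_{d_\fH(o,p)}(p')$ with $d_\fH(o,p')=1$ and applying the dilation scaling gives $d(o,p)\le C\,d^\alpha_\fH(o,p)$, and left-invariance extends this to $d(p,q)\le C\,d^\alpha_\fH(p,q)$, which is \textbf{($\alpha$-H\"ol)}.

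\smallskip

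The principal obstacle I anticipate is the group-theoretic trivialization of $K$: one must simultaneously exploit abelianness of the target, the Heisenberg commutator structure, and the rotational invariance obtained by conjugation, while having only the weak continuity furnished by \textbf{(Top)}.
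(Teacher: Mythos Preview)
Your argument is correct and follows the same overall architecture as the paper's---first pin down the $d$-similarity ratios of translations, rotations, and dilations, then handle the two cases---but the execution differs in two places worth noting.

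For translations, the paper (Lemma~3.1(3)) argues purely from \textbf{(Top)}: if $K(T)=\lambda<1$, then $T^\nu(p)\to\infty$ in the $d_\fH$-topology while $d(p,T^\nu(p))$ is dominated by the geometric series $\sum\lambda^k\,d(p,T(p))$, a contradiction. Your algebraic route---factor $K$ through the abelianisation $\K^{n-1}$ and then use rotation-conjugation to force the linear functional $\ell$ to vanish---is cleaner conceptually but leans on the rotation group acting transitively on unit spheres in $\K^{n-1}$. This fails when $\K=\R$ and $n=2$ (there $\mathrm{SO}(1)$ is trivial). The fix within your own framework is immediate: conjugate by dilations instead, since $D_\lambda T_g D_\lambda^{-1}=T_{D_\lambda(g)}$ gives $K(D_\lambda(g))=K(g)$, and sending $\lambda\to 0$ with continuity yields $K\equiv 1$ with no rotation hypothesis.

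For the H\"older bound when $\K\neq\R$, the paper decomposes $p=(\zeta,v)$, bounds $d((\zeta,0),o)$ and $d((0,v),o)$ separately via the dilation law, and combines them with the triangle inequality and H\"older's inequality to obtain the explicit constant $\beta_1\cdot 2^{(4-\alpha)/4}$. Your compactness argument on the Kor\'anyi unit sphere is shorter and more transparent; the price is that the constant $C$ is not explicit. Either way the conclusion is the same.
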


We note the significant difference between the real case and all the other cases. With the least possible assumptions, the metric $d$ is a power of the Euclidean metric when $\K=\R$. The picture is entirely different in case when  $\K\neq\R$; metrics which satisfy {\bf ($\alpha$-H\"ol)} may be of nature entirely different from the one of a power of $d_\fH$ (e.g., the Carnot-Carath\'eodory metric). It is therefore quite necessary to add more assumptions for $d$ in this case. To that end, the strongest version of our first main result for the case $\K\neq\R$ follows:

\begin{thm}\label{thm:main}
With the assumptions of Theorem \ref{thm:main1}, suppose $\K\neq\R$. Then conditions {\bf (Sim)}, {\bf (Top)} and {\bf (Inv)} together imply {\bf ($\alpha$-Met)}. 
\end{thm}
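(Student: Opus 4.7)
The strategy is to start from Theorem \ref{thm:main1}(2), which under (Sim) and (Top) already yields the H\"older bound $d(p,q)\le \beta\,d_\fH^\alpha(p,q)$ for some $\alpha\in(0,1]$, and then to promote this inequality to equality by using the new hypothesis (Inv). Write $\phi(p):=d(o,p)$.

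\emph{Invariance under Heisenberg similarities.} First I would show $d$ is left-invariant. Each left translation $L_p$ lies in ${\rm Sim}_d$ with some factor $K(p)>0$, and $p\mapsto K(p)$ is a group homomorphism $\fH_\K\to(\R_+,\cdot)$. The Heisenberg rotations fix $o$ and generate a compact subgroup of ${\rm Sim}_{d_\fH}\subseteq{\rm Sim}_d$, hence must act as $d$-isometries. Conjugation by a rotation shows $K$ is rotation-invariant, and since $[\fH_\K,\fH_\K]\supseteq\{0\}\times\Im(\K)$ while $\R_+$ is abelian, $K$ descends to a rotation-invariant homomorphism $\K^{n-1}\to\R_+$ of the form $\zeta\mapsto g(\|\zeta\|)$. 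Applying this homomorphism identity to two parallel equal-length vectors gives $g(2r)=g(r)^2$, while applying it to two orthogonal vectors of length $r$ (which exist because $\dim_\R\K^{n-1}\ge 2$ when $\K\neq\R$) gives $g(r\sqrt{2})=g(r)^2$; combining these yields $g\equiv 1$. Similarly, the scaling factor $\widetilde K(\lambda)$ of the dilation $\delta_\lambda$ on $d$ is a multiplicative homomorphism $\R_+\to\R_+$; the H\"older bound applied to $\delta_\lambda$ and to $\delta_{1/\lambda}$ forces $C^{-1}\lambda^\alpha\le\widetilde K(\lambda)\le C\lambda^\alpha$, hence $\widetilde K(\lambda)=\lambda^\alpha$ with exactly the H\"older exponent. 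Consequently $d(p,q)=\phi(p^{-1}\ast q)$, $\phi(\delta_\lambda p)=\lambda^\alpha\phi(p)$, and rotation invariance makes $\phi(\zeta,v)$ a function only of $(\|\zeta\|,|v|)$.

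\emph{Inversion pins down $\phi$ on the Kor\'anyi unit sphere.} Since $I\in\cM_d$, evaluating $d$-cross-ratios gives
\[
|\X^d|(p,q,o,\infty)=\frac{\phi(p)}{\phi(q)}\qquad\text{and}\qquad|\X^d|(I(p),I(q),\infty,o)=\frac{\phi(I(q))}{\phi(I(p))},
\]
and equating them yields $\phi(p)\,\phi(I(p))=\phi(q)\,\phi(I(q))=:C$, a positive constant independent of $p,q$. The Kor\'anyi inversion preserves the unit sphere $\Sigma:=\{|p|_\K=1\}$ and, by direct computation in the Siegel model, preserves the pair $(\|\zeta\|,|v|)$ on $\Sigma$; hence $I(p)$ lies in the same rotation orbit as $p$ there, giving $\phi(I(p))=\phi(p)$ on $\Sigma$. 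Combined with the cross-ratio identity this forces $\phi\equiv\sqrt{C}$ on $\Sigma$.

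\emph{Conclusion.} For arbitrary $p\ne o$, writing $p=\delta_r(p_0)$ with $r=d_\fH(o,p)$ and $p_0\in\Sigma$, the dilation scaling gives $\phi(p)=r^\alpha\phi(p_0)=\sqrt{C}\,d_\fH(o,p)^\alpha$, and left-invariance then yields $d(p,q)=\phi(p^{-1}\ast q)=\sqrt{C}\,d_\fH(p,q)^\alpha$, which is exactly ($\alpha$-Met) with $\beta=\sqrt{C}$. The main obstacle I anticipate is the inversion computation in the previous paragraph: while the preservation of $(\|\zeta\|,|v|)$ by $I|_\Sigma$ is transparent for $\K=\C$ via the Siegel upper half-space model, the quaternionic and octonionic cases require a more careful analysis, and invoking the Heisenberg conjugation $J:(\zeta,v)\mapsto(\overline\zeta,-v)$ as an auxiliary $d_\fH$-isometry may be the cleanest route.
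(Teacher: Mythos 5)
Your proof is correct and follows essentially the same route as the paper: the cross-ratio identity for $I$ gives $d(o,p)\cdot d(o,I(p))=\mathrm{const}$, and the explicit form of $I$ shows that $I(p)$ is a dilate (by $|{-\|\zeta\|^2+v}|^{-1}$) of a point in the rotation/$J$-orbit of $p^{-1}$, which pins down $d(o,\cdot)$ as $\beta\, d_\fH^\alpha(o,\cdot)$. The only cosmetic difference is that you normalize to the Kor\'anyi unit sphere before applying dilation homogeneity, whereas the paper carries out the computation $d(I(p),o)=|{-\|\zeta\|^2+v}|^{-\alpha}d(o,p)$ globally via the decomposition $I(p)=(D_{|\cA(p)|^{-1}}\circ J\circ R_{-\cA(p)/|\cA(p)|}\circ J)(p^{-1})$ — and the conjugation $J$ you suspected would be needed for $\K=\mH,\bO$ is indeed exactly the device the paper uses.
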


But again, adding {\bf (Inv)} to our basic assumptions seems to be excessive for the proof of {\bf ($\alpha$-Met)}; in the case $\K=\R$ {\bf (Inv)} holds {\it a posteriori}. It is natural therefore to ask if {\bf (Inv)} is really necessary. Theorem \ref{thm:main1} tells us that we obtain a rather weak result if we entirely drop {\bf (Inv)}. It turns out though that {\bf ($\alpha$-Met)} follows by replacing {\bf (Inv)} with {\bf (G)} and {\bf (Eq)}. In fact, we have:

\begin{thm}\label{thm:mainw2}
 With the assumptions of Theorem \ref{thm:main1}, suppose $\K\neq\R$. Then:
\begin{enumerate}
 \item Conditions {\bf (Sim)}, {\bf (Top)} and {\bf (G)} together imply {\bf (biLip)}.
 \item Conditions {\bf (Sim)}, {\bf (Top)}, {\bf (G)} and {\bf (Eq)} together imply {\bf ($\alpha$-Met)}.
\end{enumerate} 
\end{thm}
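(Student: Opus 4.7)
Write $\rho:=d(o,\cdot)$. Under (Sim), every Heisenberg translation $T_g(p)=g*p$, every origin-fixing rotation $R\in{\rm Sim}_{d_\fH}$, and every dilation $\delta_t$ is a $d$-similarity, say with factors $k(g),r(R),\mu(t)>0$. The conjugation identity $\delta_t T_g\delta_t^{-1}=T_{\delta_t g}$ equates factors on the two sides to give $k(\delta_t g)=k(g)$; by (Top) the map $g\mapsto k(g)=d(g,g*p)/d(o,p)$ is continuous in the Kor\'anyi topology, so letting $t\to 0$ forces $k\equiv 1$, and $d$ is left-$\fH_\K$-invariant. The rotation stabiliser of $o$ is compact, hence the continuous homomorphism $r$ into $\R_{>0}$ is trivial and $d$ is rotation-invariant. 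Finally, the continuous multiplicative map $\mu$ is a power $\mu(t)=t^\alpha$; applied to a horizontal $g=(\zeta,0)$, the triangle inequality plus left-invariance yield $2^\alpha\rho(g)=\rho(\delta_2 g)=\rho(g*g)\le 2\rho(g)$, pinning $\alpha\in(0,1]$.

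\textbf{Computing $\rho$ on the horizontal and vertical axes.} Set $A:=d(o,(1,0))$ and $B:=d(o,(0,1))$. Combining dilation covariance $\rho(\delta_t q)=t^\alpha\rho(q)$ with the transitive action of $U(n-1,\K)$ on the unit sphere of $\K^{n-1}$ gives $\rho((\zeta,0))=A\,\|\zeta\|^\alpha$. Similarly, dilation covariance together with the transitive action on the unit sphere of $\Im(\K)$ of the compact rotation subgroup inside ${\rm Sim}_{d_\fH}$ (for $\K=\mH,\bO$ this is the natural rotation group on $\Im(\K)$; for $\K=\C$ one only needs the reflection $v\mapsto-v$ coming from complex conjugation) yields $\rho((0,v))=B\,|v|^{\alpha/2}$.

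\textbf{Part (1) via (G).} Reading (G) through left-invariance as $\rho(p)^{4/\alpha}=\rho((\zeta,0))^{4/\alpha}+\rho((0,v))^{4/\alpha}$ and substituting the above formulas gives
\begin{equation*}
  d(o,p)^{4/\alpha}=A^{4/\alpha}\,\|\zeta\|^4+B^{4/\alpha}\,|v|^2.
\end{equation*}
Comparing with $d_\fH(o,p)^4=\|\zeta\|^4+|v|^2$ immediately delivers
\begin{equation*}
  \min(A,B)\,d_\fH(o,p)^\alpha\le d(o,p)\le \max(A,B)\,d_\fH(o,p)^\alpha,
\end{equation*}
and left-invariance propagates these bi-Lipschitz bounds to all pairs $(p,q)$, proving (biLip).

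\textbf{Part (2) via (Eq), and the main difficulty.} The condition (Eq) is precisely $A=B$, whereupon the displayed formula collapses to $d(o,p)=A\,d_\fH(o,p)^\alpha$; left-invariance then promotes this to $d(p,q)=A\,d_\fH(p,q)^\alpha$, which is ($\alpha$-Met) with $\beta:=A$. The principal obstacle lies in the first paragraph: coaxing out of the thin pair (Sim)+(Top) the \emph{isometry} property (not merely similarity) of Heisenberg translations and rotations. This rests on the interplay of (Top) with conjugation by dilations for translations and on compactness of the origin-stabiliser for rotations; a secondary delicate point is establishing transitivity on the unit sphere of $\Im(\K)$, which is routine when $\K\in\{\mH,\bO\}$ but in the complex case requires recognising the antipodal reflection $v\mapsto-v$ inside ${\rm Sim}_{d_\fH}$.
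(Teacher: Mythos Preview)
Your proof is correct and follows essentially the same route as the paper: first extract from {\bf (Sim)} and {\bf (Top)} that translations and rotations are $d$-isometries and that dilations scale by $t^\alpha$ with $\alpha\in(0,1]$ (this is the paper's Lemma~\ref{lem:step1}); then compute $\rho$ on the horizontal and vertical axes; and finally feed these into {\bf (G)} to obtain the bi-Lipschitz bounds, which collapse to equality under {\bf (Eq)}. The one genuine difference is your argument that translations are isometries: you use the conjugation identity $\delta_t T_g\delta_t^{-1}=T_{\delta_t g}$ together with continuity of $g\mapsto k(g)$ at $o$, whereas the paper argues by iteration (if $K(T)=\lambda<1$ then $d(p,T^\nu p)$ is bounded by a geometric series while $T^\nu p\to\infty$, contradicting {\bf (Top)}). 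Both are valid; yours is perhaps cleaner, while the paper's avoids appealing to continuity of $d$ in the $d_\fH$-topology at the origin. One tiny omission: you assert $\alpha\in(0,1]$ but your triangle-inequality step only gives $\alpha\le 1$; the strict positivity of $\alpha$ needs a word (e.g.\ if $\alpha\le 0$ then $d(o,(\delta,0))=\delta^\alpha d(o,(1,0))$ stays bounded away from $0$ as $\delta\to 0$, contradicting continuity of $d$ at $o$ via {\bf (Top)}).
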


Therefore, {\bf (Inv)} follows as a side result of  Theorem \ref{thm:mainw2} and we may further observe  that {\bf (G)} and {\bf (Eq)} hold vacuously in the real case. Moreover, {\bf (G)} can be replaced with an equivalent statement which is the closest to parallelogram law in the $\K$-Heisenberg group setting, $\K\neq\R$.  For this set for each $p\in\fH$,
$$
|p|=d(o,p).
$$
It turns out that if {\bf (Sim)} holds, then {\bf (G)} is equivalent to the following condition, see Proposition \ref{prop:p-l}:

\medskip

\begin{enumerate}
\item [{{\bf (P-L)}}] For $\alpha\in (0,1]$ and for each $p,q\in\fH$,
\begin{eqnarray*}
\left|p*q\right|^{4/\alpha}+ \left|p^{-1}*q\right|^{4/\alpha}+\left|p*q^{-1}\right|^{4/\alpha}+\left|p^{-1}*q^{-1}\right|^{4/\alpha}&=&\\
2\left(\left|\Pi_{\K^{n-1}}(p*q)\right|^{4/\alpha}+\left|\Pi_{\K^{n-1}}\left(p^{-1}*q\right)\right|^{4/\alpha}\right)&+&\\
\left|\Pi_{\Im(\K)}(p*q)\right|^{4/\alpha}+ \left|\Pi_{\Im(\K)}\left(p^{-1}*q\right)\right|^{4/\alpha}+\left|\Pi_{\Im(\K)}\left(p*q^{-1}\right)\right|^{4/\alpha}+\left|\Pi_{\Im(\K)}\left(p^{-1}*q^{-1}\right)\right|^{4/\alpha},
\end{eqnarray*}
where $\Pi_{\K^{n-1}}$ and $\Pi_{\Im(\K)}$ are the projections of $\fH$ to $\K^{n-1}$ and $\Im(\K)$, respectively. 
\end{enumerate} 

\medskip

Thus an equivalent to the second statement of Theorem \ref{thm:mainw2} (2) is:

\medskip

\begin{enumerate}
\item [{($2^{'}$)}] Conditions {\bf (Sim)}, {\bf (Top)}, {\bf (P-L)} and {\bf (Eq)} together imply {\bf ($\alpha$-Met)}.
\end{enumerate}

\medskip

Recall now that a metric $d$ defined in a space $S$ is Ptolemaean if for each quadruple of points $\fp=(p,q,r,s)$ of $S$ the following inequality is satisfied for all possible permutations of points in $\fp$:
\begin{equation}\label{eq:Ptolineq}
 d(p,r)\cdot d(q,s)\le d(p,q)\cdot d(r,s)+d(p,s)\cdot d(r,q).
\end{equation}
A Ptolemaean circle is a subset $\sigma$ of $S$ which is topologically equivalent to the unit circle $S^1$ and for each quadruple of points $\fp=(p,q,r,s)$ of $\sigma$ such that $p$ and $r$ separate $q$ and $s$, Inequality \ref{eq:Ptolineq} holds as an equality.
It is well known that $(\partial\bH^n_\K, d_\fH)$ is Ptolemaean, see for instance \cite{P}; therefore it is natural to ask which of the metrics $d$ who satisfy {\bf ($\alpha$-Met)} satisfies also:

\medskip

\begin{enumerate}
  \item [{{\bf (Ptol)}}] The metric $d$ is Ptolemaean.
  \end{enumerate}
  
  \medskip
  
  It turns out that we will also need:

  \medskip
  
  \begin{enumerate}
  \item [{{\bf (Circ)}}] The metric $d$ has a Ptolemaean circle.
 \end{enumerate}
 
 \medskip

 We have the following corollary to Theorem \ref{thm:mainw2}:

\begin{thm}\label{thm:pt}
Condition {\bf ($\alpha$-Met)} implies {\bf (Ptol)} and {\bf (Ptol)} together with {\bf (Circ)}  are equivalent to  {\bf ($1$-Met)}. Therefore a metric $d$ in $\partial\bH^n_\K$ which satisfies conditions: 
\begin{enumerate}
\item {\bf (Sim)}, {\bf (Top)} and {\bf (Circ)} if $\K=\R$ and
\item {\bf (Sim)}, {\bf (Top)}, {\bf (P-L)} or {\bf (G)}, {\bf (Eq)}  and {\bf (Circ)} if $\K\neq\R$,
\end{enumerate}
is necessarily a constant multiple of the Kor\'anyi metric $d_\fH$.
\end{thm}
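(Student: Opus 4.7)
The plan is to prove the statement in three implications and then combine them with the earlier rigidity theorems. First I would establish that \textbf{($\alpha$-Met)} implies \textbf{(Ptol)} by the standard snowflake-preserves-Ptolemy trick: starting from the $d_\fH$-Ptolemaic inequality, raising to the power $\alpha \in (0,1]$, invoking the subadditivity $(x+y)^\alpha \le x^\alpha + y^\alpha$ on positive reals, and multiplying through by $\beta^2$, one obtains the corresponding inequality for $d = \beta\,d_\fH^\alpha$; the same argument applies to all permutations of the quadruple. For the direction \textbf{(1-Met)} $\Rightarrow$ \textbf{(Ptol)} $+$ \textbf{(Circ)}, the Ptolemaic inequality is the $\alpha = 1$ case of the above, while the existence of a Ptolemaean circle is inherited from $d_\fH$ (e.g.\ boundaries of totally geodesic real hyperbolic $2$-planes in $\bH^n_\K$, as in \cite{P}); multiplication by the constant $\beta$ preserves both.

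The main obstacle is the converse \textbf{(Ptol)} $+$ \textbf{(Circ)} $\Rightarrow$ \textbf{(1-Met)}, which I would carry out within the class of metrics of the form $d = \beta\,d_\fH^\alpha$ that the earlier theorems already produce. Let $\sigma$ be a $d$-Ptolemaean circle; since $\sigma$ is homeomorphic to $S^1$, one may pick four pairwise distinct points $p, q, r, s \in \sigma$ with $p$ and $r$ separating $q$ and $s$. Equality in the $d$-Ptolemaic relation combined with $d = \beta\,d_\fH^\alpha$ yields
\[
X^\alpha = A^\alpha + B^\alpha,
\]
where $X = d_\fH(p,r)\,d_\fH(q,s)$, $A = d_\fH(p,q)\,d_\fH(r,s)$, and $B = d_\fH(p,s)\,d_\fH(q,r)$ are all strictly positive. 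The $d_\fH$-Ptolemaic inequality gives $X \le A + B$; if $\alpha < 1$, strict subadditivity of $t \mapsto t^\alpha$ on $\R_{>0}$ would yield $A^\alpha + B^\alpha > (A+B)^\alpha \ge X^\alpha$, contradicting the displayed equality. Hence $\alpha = 1$, i.e.\ \textbf{(1-Met)} holds.

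For the corollary, I would assemble the hypotheses to invoke the appropriate rigidity result: Theorem \ref{thm:main1} when $\K = \R$, and Theorem \ref{thm:mainw2} when $\K \ne \R$ (using the equivalence between \textbf{(P-L)} and \textbf{(G)} provided by Proposition \ref{prop:p-l}). In either case, one extracts $d = \beta\,d_\fH^\alpha$; the first paragraph then supplies \textbf{(Ptol)}, and combined with the hypothesized \textbf{(Circ)}, the argument of the second paragraph forces $\alpha = 1$, so $d$ is a constant multiple of $d_\fH$ as asserted.
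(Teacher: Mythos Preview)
Your argument is correct and shares the paper's core idea: (\textbf{$\alpha$-Met}) $\Rightarrow$ (\textbf{Ptol}) via subadditivity of $t\mapsto t^\alpha$, and then (\textbf{Circ}) forces $\alpha=1$ by playing a $d$-equality off against a $d_\fH$-inequality through strict subadditivity. The only real difference is in how the second step is executed. The paper first uses the M\"obius action to send two of the four circle points to $o$ and $\infty$, so that the Ptolemaean equality for $d$ collapses to the three-point relation $d_\fH^\alpha(p,s)=d_\fH^\alpha(p,o)+d_\fH^\alpha(s,o)$, and then invokes the ordinary \emph{triangle} inequality for $d_\fH$ to derive the contradiction. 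You instead keep all four points finite and use the $d_\fH$-\emph{Ptolemaean} inequality $X\le A+B$ directly. Your version is marginally cleaner in that it avoids the normalisation and the attendant conventions at $\infty$; the paper's version has the small bonus of identifying the Ptolemaean circle as an $\R$-circle (since equality in the $d_\fH$-triangle inequality is what remains when $\alpha=1$). You also spell out the easy direction (\textbf{1-Met}) $\Rightarrow$ (\textbf{Ptol}) $+$ (\textbf{Circ}), which the paper leaves implicit.
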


This is in the spirit of the old result of Schoenberg, see \cite{S}. That particular result was on metrics which were derived from semi-norms which  also share  Ptolemaean property.

\thanks{ Part of this work was carried out while IDP was visiting University of Z\"urich, Switzerland. Hospitality of Instit\"ut f\"ur Mathematik, University of Z\"urich, is gratefully appreciated. 

\section{Preliminaries}\label{sec:prel}

This section is divided in two parts. In the first part (Section \ref{sec:hyp}), we state for clarity some known results about the Kor\'anyi metric in $\partial\bH^n_\K$ and its properties. In the second part (Section \ref{sec:Mob}), we state in brief some elementary facts concerning M\"obius geometry, mainly focusing on properties of the similarity group.

\subsection{The $\K$-Heisenberg group $\fH_\K$ and the Kor\'anyi metric $d_\fH$}\label{sec:hyp} 
The following are well known; we refer the reader to the classical book of Mostow, \cite{M}, or to \cite{P}. Another useful reference for the case $\K=\C$ is the book of Goldman, \cite{Gol}. For the octonionic case in particular, we refer to \cite{A} and to \cite{M-P}.

\medskip

We shall use hereafter the following notation ($n>1$):
$$
{\rm G}_\K=\left\{\begin{matrix} {\rm SO}(n,1)& & \text{if}\; \;\K=\R,\\
{\rm SU}(n,1)& & \text{if}\;\; \K=\C,\\
{\rm Sp}(n,1)& & \text{if}\; \;\K={\mathbb H},\\
F_{4(-20)} & & \text{if}\; \;\K={\mathbb O}\;(n=2).\end{matrix}\right.
$$
Also,
$$
{\rm F}(n)=\left\{\begin{matrix} {\rm SO}(n)& & \text{if}\; \;\K=\R,\\
{\rm SU}(n)& & \text{if}\;\; \K=\C,\\
{\rm Sp}(n)& & \text{if}\; \;\K={\mathbb H},\\
{\rm Spin}_7(\R) & & \text{if}\; \;\K={\mathbb O}\;(n=2).\end{matrix}\right.
$$
$\K$-hyperbolic space $\bH^n_\K$ is ${\rm G}_\K/{\rm F}(n)$. The  metric $d_\fH$ as given in Equation (\ref{eq:Kor}) is invariant under the following transformations (and their extensions to $\infty$).
\begin{enumerate}
 \item Left translations which come from the action of $\fH_\K$ on itself: For any fixed point $(\zeta',v')\in\fH_\K$ let
$$
T_{(\zeta',v')}(\zeta,v)=(\zeta',v')*(\zeta,v), \quad T_{(\zeta',v')}(\infty)=\infty.
$$
In the particular case where $\K=\R$, $d_\fH$  is also invariant under the right translations. 
\item Rotations: For the cases $\K\neq{\mathbb O}$, these come from the action of ${\rm F}(n-1)$ on $\K^{n-1}$. Specifically, given a $U\in{\rm F}(n-1)$, $\K\neq{\mathbb O}$, we define
$$
S_U(\zeta,v)=(U\cdot\zeta,v),\quad S_U(\infty)=\infty.
$$
Only in the case where $\K=\mH$ we have the action of ${\rm F}(1)={\rm Sp}(1)$ given by
$$
(\zeta_1,\dots,\zeta_{n-1},v)\mapsto (\mu\zeta_1\mu^{-1},\dots,\mu\zeta_{n-1}\mu^{-1},\mu v\mu^{-1}),\quad \mu\in{\rm Sp}(1);
$$
observe that in all other cases this action is vacuous.
In the particular case $\K={\mathbb O}$, for given unit imaginary octonion $\mu$, let
\begin{equation*}
S_{\mu}(\zeta,v)=(\zeta\cdot\overline{\mu},\mu v\overline{\mu}),\quad S_{\mu}(\infty)=\infty.
\end{equation*}
We stress at this point that in general $S_\mu\circ S_\nu\neq S_{\mu\nu}$ for $\mu,\nu$ unit imaginary octonions. The group generated by transformations $S_\mu$ is the compact group ${\rm Spin}_7(\R)$.
\end{enumerate}
All these actions generate the group ${\rm Isom}_{d_\fH}(\partial(\bH_\K^n)$ of (orientation-preserving) $d_\fH$-{\it isometries}; this acts transitively on $\fH_\K$. 
We also consider two other kinds of transformations of $\partial\bH_\K^n$.
\begin{enumerate}
\item[{(3)}] Dilations: If $\delta\in\R_*^+$ we define
$$
D_\delta(\zeta,v)=(\delta\zeta,\delta^2v),\quad D_\delta(\infty)=\infty.
$$
(In the octonionic case, $\delta^2$ is used in \cite{M-P} instead of $\delta$, but the model for $\fH_{\mathbb{O}}$ is somewhat different).
One verifies that for every $(\zeta,v),(\zeta',v')\in\partial\bH_\K^n$ we have
$$
d_\fH\left(D_\delta(\zeta,v),D_\delta(\zeta',v')\right)=\delta\cdot d_\fH\left((\zeta,v),(\zeta',v')\right).
$$
Thus the metric $d_\fH$ is  scaled up to multiplicative constants by the action of dilations. We mention here that together with $d_\fH$-isometries, dilations generate the $d_\fH$-{\it similarity group} ${\rm Sim}_{d_\fH}(\partial\bH_\K^n)$.
\item[{(4)}] Inversion $I$ is given by
$$
I(\zeta,v)=\left(\zeta(-\|\zeta\|^2+v)^{-1}\;,\;\overline{v}\left|-\|\zeta\|^2+v\right|^{-2}\right),\;\;\text{if}\;(\zeta,v)\neq o,\infty,\;\quad I(o)=\infty,\;I(\infty)=o.
$$
Inversion $I$ is an involution of $\partial\bH^n_\K$. Moreover, for all $p=(\zeta,v),p'=(\zeta',v')\in\fH_\K\setminus\{o\}$ we have
$$
d_\fH(I(p),o)=\frac{1}{d_\fH(p,o)},\quad d_\fH(I(p),I(p'))=\frac{d_\fH(p,p')}{d_\fH(p,o)\;d_\fH(o,p')}.
$$
\end{enumerate}

\medskip

The similarity group ${\rm Sim}_{d_\fH}(\partial\bH_\K^n)$ is the semidirect product $\R\rtimes{\rm Isom}_{d_\fH}(\partial\bH_\K^n)$. The group generated by ${\rm Sim}_{d_\fH}(\partial\bH_\K^n)$ and inversion $I$ is isomorphic to $G_\K$ (with the exception of the case $\K=\R$, $n$ odd: in this case, $I$ reverses orientation). 
Given two distinct points on the boundary, we can find an element of $G_\K$  mapping those points to $o$ and $\infty$ respectively; in particular $G_\K$  acts doubly transitively on the boundary. In the exceptional case where $\K=\R$, the action of $G_\R$  is triply transitive; this follows from the fact that we can map three distinct points of the boundary to the points $o$, $\infty$ and ${\bf e}_1$, respectively.

\subsection{M\"obius Group, Similarity Group}\label{sec:Mob}

Recall from the introduction that in the most general setting we start from a mere metric space $(S,d)$ and we suppose that there is a {\it remote point} $\infty$ such that:
\begin{enumerate}
 \item $\overline{S}=S\cup\{\infty\}$ is compact;
 \item $d$ is extended to the compactification $\overline{S}$ by setting
 $
 d(p,\infty)=+\infty$, 
for each $p\in S$ and also $d(\infty,\infty)=0$.
\end{enumerate}

The notion of metric cross-ratio is next. The following proposition may be verified straightforwardly, detailed discussions about cross-ratios may be found among others in \cite{F}, \cite{FP}, \cite{KR1} and \cite{P}:

\medskip

\begin{prop}\label{prop:abs-cr}
 Let $(S,d)$ be a metric space with a remote point $\infty$. Denote by $\cC$ the space $S\times S\times S\times S\setminus\{diagonals\}$. Then the map $|\X^d|:\cC\to \R_*^+$ defined for each $\fp=(p_1,p_2,p_3,p_4)\in\cC$ by
 \begin{equation}\label{eq:abs-cr}
  |\X^d|(\fp)=\frac{d(p_4,p_2)}{d(p_4,p_1)}\cdot\frac{d(p_3,p_1)}{d(p_3,p_2)},
 \end{equation}
satisfies the following:
\begin{enumerate}
 \item {\it Symmetries:}
 $$
 |\X^d|(p_1,p_2,p_3,p_4)=|\X^d|(p_2,p_1,p_4,p_3)=|\X^d|(p_3,p_4,p_1,p_2)=|\X^d|(p_4,p_3,p_2,p_1).
 $$
 \item Let
 $$
 |\X^d_1|(\fp)=|\X^d|(p_1,p_2,p_3,p_4),\quad\text{and}\quad |\X^d_2|(\fp)=|\X^d|(p_1,p_3,p_2,p_4).
 $$
 Then
 \begin{eqnarray*}
  &&
  |\X^d|(p_1,p_3,p_4,p_2)=\frac{1}{|\X^d_2|},\quad |\X^d|(p_1,p_4,p_3,p_2)=\frac{|\X^d_1|}{|\X^d_2|},\\
  &&
   |\X^d|(p_1,p_2,p_4,p_3)=\frac{1}{|\X^d_1|},\quad |\X^d|(p_1,p_4,p_2,p_3)=\frac{|\X^d_2|}{|\X^d_1|}.
 \end{eqnarray*}
Thus for any possible permutation $\tilde\fp$ of points of a given quadruple $\fp$, $|\X^d(\tilde\fp)|$ depends only on $|\X^d_1|(\fp)$ and $|\X^d_1|(\fp)$. 
\end{enumerate}

\end{prop}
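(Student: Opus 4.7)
The approach will be direct computation, since the statement is entirely combinatorial: it describes how the rational expression in (\ref{eq:abs-cr}) transforms under the $24$ permutations of $S_4$ acting on $(p_1,p_2,p_3,p_4)$. Throughout I will abbreviate $d_{ij}:=d(p_i,p_j)$, which by symmetry of the metric satisfies $d_{ij}=d_{ji}$; this is the only nontrivial property used. When one $p_i$ equals $\infty$, the conventions $d(p,\infty)=+\infty$ and $d(\infty,\infty)=0$ cause the two factors $d_{ij}$ with $j\neq i$ appearing in numerator and denominator of (\ref{eq:abs-cr}) to cancel in the ratio, so $|\X^d|$ extends to quadruples meeting $\infty$ as a genuine limit and the same identities will hold without change.

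For part (1) the plan is to rewrite the definition as $|\X^d|(p_1,p_2,p_3,p_4)=d_{13}\,d_{24}/(d_{14}\,d_{23})$ and then substitute each of the three alternative orderings directly into (\ref{eq:abs-cr}). For example, for $(p_2,p_1,p_4,p_3)$ the substitution gives $d(p_3,p_1)d(p_4,p_2)/(d(p_3,p_2)d(p_4,p_1))=d_{13}\,d_{24}/(d_{23}\,d_{14})$, and the orderings $(p_3,p_4,p_1,p_2)$ and $(p_4,p_3,p_2,p_1)$ yield the same quantity after applying $d_{ij}=d_{ji}$. In group-theoretic language this verifies that $|\X^d|$ is invariant under the Klein four-subgroup $V_4=\{\mathrm{id},(12)(34),(13)(24),(14)(23)\}$ of $S_4$; since $|S_4|/|V_4|=6$ there can be at most six distinct cross-ratio values on an unordered quadruple, and part (2) will exhibit them explicitly.

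For part (2) I will first compute
$$
|\X^d_1|(\fp)=\frac{d_{13}\,d_{24}}{d_{14}\,d_{23}},\qquad |\X^d_2|(\fp)=\frac{d_{12}\,d_{34}}{d_{14}\,d_{23}},
$$
and then expand each of the four remaining permutations by the same substitution. For instance $|\X^d|(p_1,p_3,p_4,p_2)=d_{23}d_{14}/(d_{12}d_{34})=1/|\X^d_2|(\fp)$, and $|\X^d|(p_1,p_4,p_3,p_2)=d_{24}d_{13}/(d_{12}d_{34})=|\X^d_1|(\fp)/|\X^d_2|(\fp)$; the remaining three identities follow in exactly the same way. Together with $|\X^d_1|$ and $|\X^d_2|$ themselves, these six rational expressions represent the six cosets of $V_4$ in $S_4$, proving the concluding sentence that every permuted cross-ratio is a function of $|\X^d_1|(\fp)$ and $|\X^d_2|(\fp)$. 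No real obstacle is anticipated: the argument is a change of variables in a rational expression, with the symmetry of $d$ doing all of the work.
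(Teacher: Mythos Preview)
Your proposal is correct and is precisely the ``straightforward verification'' the paper invokes in lieu of a written proof; the paper itself provides no argument beyond referring to the literature. The computations you outline (with the $d_{ij}$ shorthand and the Klein four-group observation) are exactly what a direct check requires, so there is nothing to add or contrast.
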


A homeomorphism $\phi:S\to S$ shall be called a {\it $d$-M\"obius map} if $|\X^d(\fp)|=|\X^d(\phi(\fp))|$ for each $\fp=(p_1,p_2,p_3,p_4)\in\cC$, where, by $\phi(\fp)$ we denote the quadruple $\left(\phi(p_1),\phi(p_2),\phi(p_3),\phi(p_4)\right)$. This is equivalent to say that for each given quadruple $\fp$,
$$
|\X^d_1|(\phi(\fp))=|\X^d_1|(\fp) \quad\text{and} \quad |\X^d_2|(\phi(\fp))=|\X^d_2|(\fp).
$$
The set of all M\"obius maps of $S$ form a group $\cM_d=\cM_d(S)$, which we shall call the {\it M\"obius group} of $(S,d)$. 
In particular, we consider the subset ${\rm Sim}_d$ of $\cM_d$ consisting of similarities: An element $\phi\in\cM_d$ is a {\it similarity}, if there exists a $K_\phi>0$ such that for every $p,q\in S$
 $$
 d\left(\phi(p),\phi(q)\right)=K_\phi\cdot d(p,q).
 $$
 By definition, $K_\phi=K(\phi)$ is independent of the choice of points.
 
 \begin{prop}\label{prop:homoth}
The set ${\rm Sim}_d$ is a subgroup of $\cM_d$. There exists a group homomorphism $K:{\rm Sim}_d\to$ $ \R^+_*$ given for each $\phi\in{\rm Sim}_d$ by
\begin{equation*}\label{eq:homoth}
K(\phi)=K_\phi.
\end{equation*}
Here $\R^+_*$ is the multiplicative topological group of the positive reals, inheriting its topology from the real line.
\end{prop}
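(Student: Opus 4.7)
The plan is to verify the claim by direct unpacking of the two definitions in play: that a similarity preserves the metric cross-ratio, and that the scaling factor behaves multiplicatively under composition. The whole argument is essentially formal; there is no substantive obstacle, only bookkeeping.

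First I would check that the inclusion $\mathrm{Sim}_d \subseteq \cM_d$ actually makes sense, i.e.\ that every $\phi$ satisfying $d(\phi(p),\phi(q)) = K(\phi)\cdot d(p,q)$ for all $p,q\in S$ is a $d$-M\"obius map. For any quadruple $\fp=(p_1,p_2,p_3,p_4)\in\cC$, the four distances appearing in $|\X^d|(\phi(\fp))$ (see Equation \eqref{eq:abs-cr}) are each scaled by the same positive factor $K(\phi)$, so
$$
|\X^d|(\phi(\fp))=\frac{K(\phi)\,d(p_4,p_2)}{K(\phi)\,d(p_4,p_1)}\cdot\frac{K(\phi)\,d(p_3,p_1)}{K(\phi)\,d(p_3,p_2)}=|\X^d|(\fp),
$$
with the usual extension to $\infty$ (which is preserved because the scaling relation forces $\phi(\infty)=\infty$ in any homeomorphic extension to $\overline{S}$).

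Next I would verify the subgroup axioms. The identity map lies in $\mathrm{Sim}_d$ with $K(\mathrm{id})=1$. If $\phi,\psi\in\mathrm{Sim}_d$, then for every $p,q\in S$,
$$
d\bigl(\phi\circ\psi(p),\phi\circ\psi(q)\bigr)=K(\phi)\,d\bigl(\psi(p),\psi(q)\bigr)=K(\phi)\,K(\psi)\,d(p,q),
$$
so $\phi\circ\psi\in\mathrm{Sim}_d$ and
\begin{equation}\label{eq:Kmult}
K(\phi\circ\psi)=K(\phi)\cdot K(\psi).
\end{equation}
Applying \eqref{eq:Kmult} to the pair $(\phi,\phi^{-1})$ gives $K(\phi)\,K(\phi^{-1})=K(\mathrm{id})=1$; since $K(\phi)>0$ this yields $K(\phi^{-1})=K(\phi)^{-1}$, so $\phi^{-1}$ scales distances by the positive factor $K(\phi)^{-1}$ and hence lies in $\mathrm{Sim}_d$. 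This establishes that $\mathrm{Sim}_d$ is a subgroup of $\cM_d$.

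Finally, the well-definedness of $K(\phi)$ is built into the definition (the scaling constant is independent of the chosen pair of points), so $K:\mathrm{Sim}_d\to\R^+_*$ is a well-defined set map, and equation \eqref{eq:Kmult} is precisely the statement that it is a group homomorphism into the multiplicative group of positive reals. No continuity claim is being made, so there is nothing further to check.
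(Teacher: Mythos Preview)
Your proof is correct and follows essentially the same route as the paper: both compute $d(\phi\circ\psi(p),\phi\circ\psi(q))=K_\phi K_\psi\, d(p,q)$ to get closure and multiplicativity, and then read off $K_{\phi^{-1}}=(K_\phi)^{-1}$. You add an explicit check that a similarity preserves the cross-ratio and that the identity lies in $\mathrm{Sim}_d$, which the paper treats as implicit (it defines $\mathrm{Sim}_d$ as a subset of $\cM_d$ from the outset); this is harmless extra detail rather than a different approach.
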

\begin{proof}
We show that ${\rm Sim}_d$ is a subgroup of $\cM_d$. If $\phi,\psi\in{\rm Sim}_d$, then for every $p,q\in S$,
 $$
 d\left(\phi(p),\phi(q)\right)=K_\phi\cdot d(p,q)\quad\text{and}\quad d\left(\psi(p),\psi(q)\right)=K_\psi\cdot d(p,q).
 $$
 Therefore for each $p,q\in S$,
 \begin{eqnarray*}
  d\left((\phi\circ\psi)(p),(\phi\circ\psi)(q)\right)&=&d\left((\phi\left(\psi(p)\right),(\phi\left(\psi
  (q)\right)\right)\\
  &=&K_\phi\cdot d\left(\psi(p),\psi(q)\right)\\
  &=&K_\phi\cdot K_\psi\cdot d(p,q).
 \end{eqnarray*}
Since this shows that $\phi\circ\psi$ is a similarity, we must also have
$$
d\left((\phi\circ\psi)(p),(\phi\circ\psi)(q)\right)=K_{\phi\circ\psi}\cdot d(p,q).
$$
Hence we also must have $K_{\phi^{-1}}=(K_\phi)^{-1}$.  
\end{proof}

We call $K$ the {\it similarity homomorphism}.  Under our assumptions for $(S,d)$ in the beginning of this section, we endow $\cM_d$ with the compact-open topology and accordingly ${\rm Sim}_d$ inherits the relative topology. We then have:

\begin{prop}\label{prop:cont}
The similarity homomorphism $K$ is continuous.
\end{prop}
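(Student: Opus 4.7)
The plan is to observe that the scaling factor $K(\phi)$ is read off the image of just two points: for any fixed distinct $p,q\in S$ and any $\psi\in{\rm Sim}_d$ one has $K(\psi)=d(\psi(p),\psi(q))/d(p,q)$. Continuity of $K$ at $\phi\in{\rm Sim}_d$ therefore reduces to continuity of $\psi\mapsto d(\psi(p),\psi(q))$, which I intend to factor as $\psi\mapsto(\psi(p),\psi(q))\mapsto d(\psi(p),\psi(q))$, using continuity of the evaluation maps in the compact-open topology together with continuity of the extended distance.

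The one preliminary point is that every similarity fixes the remote point. Indeed, if $\phi\in{\rm Sim}_d$ satisfied $\phi(p_0)=\infty$ for some $p_0\in S$, then choosing any $p'\in S\setminus\{p_0\}$ with $\phi(p')\in S$ (such a $p'$ exists because $\phi$ is a bijection of $\overline{S}$) would yield
$$
K(\phi)\cdot d(p_0,p')=d(\phi(p_0),\phi(p'))=d(\infty,\phi(p'))=+\infty,
$$
contradicting $K(\phi)<+\infty$ and $d(p_0,p')<+\infty$. Hence $\phi(\infty)=\infty$, and consequently $\phi(p),\phi(q)\in S$ for all $p,q\in S$.

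Now fix $\phi\in{\rm Sim}_d$ together with distinct base points $p,q\in S$. The evaluations $\epsilon_p,\epsilon_q:{\rm Sim}_d\to\overline{S}$, $\psi\mapsto\psi(p)$ and $\psi\mapsto\psi(q)$, are continuous in the compact-open topology. Since $\overline{S}$ is compact Hausdorff the singleton $\{\infty\}$ is closed, so $S$ is open in $\overline{S}$, and by the preceding paragraph $\phi$ lies in the open set $U=\epsilon_p^{-1}(S)\cap\epsilon_q^{-1}(S)$. On $U$, both $\epsilon_p$ and $\epsilon_q$ take values in $S$, where the subspace topology from $\overline{S}$ coincides with the $d$-topology; the joint continuity of $d:S\times S\to\R$ then gives continuity of $\psi\mapsto d(\psi(p),\psi(q))$ on $U$, and dividing by the positive constant $d(p,q)$ yields continuity of $K$ at $\phi$. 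The only mildly delicate ingredient is the preliminary observation that similarities must fix $\infty$, which is what legitimises restricting to the neighbourhood $U$ where the ordinary metric $d$ is continuous; once that is in hand, the rest is a direct unpacking of the compact-open topology.
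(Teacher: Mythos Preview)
Your proof is correct and follows essentially the same route as the paper's: write $K(\phi)=d(\phi(p),\phi(q))/d(p,q)$ for fixed $p,q\in S$, then invoke continuity of the evaluation maps in the compact-open topology together with continuity of $d$. The extra care you take in verifying that similarities fix $\infty$ (so that the evaluations land in $S$, where $d$ is finite and continuous) is a point the paper's own argument glosses over here and only establishes afterwards in Proposition~\ref{prop:K}.
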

\begin{proof}
 For any $\phi\in {\rm Sim}_d$ we have
 $$
 K(\phi)=\frac{d(\phi(p),\phi(q))}{d(p,q)},
 $$
 for any fixed $p,q\in S$. Now the map ${\rm Sim}_d\to S$, $\phi\mapsto\phi(p)$, is continuous in the compact-open topology for any fixed $p$, and $d:S\times S\to\R^+$ is continuous by definition. 
\end{proof}

We are interested in the stabiliser of $\infty$ in $\cM_d$, that is,
$$
{\rm Stab}_d(\infty):={\rm Stab}_{\cM_d}(\infty)=\{\phi\in\cM_d\;|\;\phi(\infty)=\infty \}.
$$

\begin{prop}\label{prop:K}
${\rm Stab}_d(\infty)={\rm Sim}_d$. 
\end{prop}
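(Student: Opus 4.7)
The plan is to prove both inclusions separately.

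For the inclusion ${\rm Sim}_d\subseteq {\rm Stab}_d(\infty)$: any $\phi\in{\rm Sim}_d$ is, by definition, an element of $\cM_d$, hence a homeomorphism of $\overline{S}$. I would argue that $\phi(\infty)=\infty$ by using the defining relation $d(\phi(p),\phi(q))=K_\phi\cdot d(p,q)$: pick any $p\in S$; then $d(\phi(p),\phi(\infty))=K_\phi\cdot d(p,\infty)=+\infty$, which by the extension of $d$ to $\overline{S}$ forces $\phi(\infty)=\infty$ (since $d(\phi(p),q)=+\infty$ happens only when $q=\infty$).

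For the reverse inclusion ${\rm Stab}_d(\infty)\subseteq{\rm Sim}_d$, I would specialize the invariance of the metric cross-ratio to quadruples of the form $\fp=(p_1,p_2,p_3,\infty)$. Using the convention that $d(\infty,p_i)/d(\infty,p_j)=1$ (both numerator and denominator being $+\infty$), the cross-ratio reduces to
\[
|\X^d|(p_1,p_2,p_3,\infty)=\frac{d(p_3,p_1)}{d(p_3,p_2)}.
\]
Hence, for every $\phi\in{\rm Stab}_d(\infty)$ and every triple of pairwise distinct $p_1,p_2,p_3\in S$,
\[
\frac{d(\phi(p_3),\phi(p_1))}{d(\phi(p_3),\phi(p_2))}=\frac{d(p_3,p_1)}{d(p_3,p_2)}.
\]
Fixing $p_3=a$ and letting $p_1=x$, $p_2=y$ vary, this means that the ratio
\[
K_a(x):=\frac{d(\phi(a),\phi(x))}{d(a,x)}
\]
is independent of $x\in S\setminus\{a\}$; denote the common value by $K_a$.

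The final step is to show $K_a$ does not depend on $a$ either. Given two distinct points $a,a'\in S$, I would choose any auxiliary $x\in S\setminus\{a,a'\}$; the two identities $K_a=d(\phi(a),\phi(a'))/d(a,a')$ (taking $x=a'$ in the formula for $K_a$) and $K_{a'}=d(\phi(a'),\phi(a))/d(a',a)$ (taking $x=a$ in the formula for $K_{a'}$) are equal term by term, by symmetry of the metric. Hence $K_a=K_{a'}=:K_\phi$ is a universal positive constant, and $d(\phi(p),\phi(q))=K_\phi\cdot d(p,q)$ for all $p,q\in S$, showing $\phi\in{\rm Sim}_d$.

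There is no serious obstacle here; the only subtle point is the correct interpretation of the cross-ratio when one entry is the remote point $\infty$, but this is already covered by the conventions set up in the introduction. Continuity of $\phi$ at $\infty$ is automatic since $\phi\in\cM_d$ is assumed to be a homeomorphism of $\overline{S}$, so no extra argument is needed to promote the similarity property from $S$ to $\overline{S}$.
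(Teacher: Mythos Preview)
Your argument is essentially the paper's, with one small slip. In the inclusion ${\rm Sim}_d\subseteq{\rm Stab}_d(\infty)$ you invoke the similarity relation $d(\phi(p),\phi(q))=K_\phi\,d(p,q)$ with $q=\infty$, but by definition this relation is only postulated for $p,q\in S$; plugging in $q=\infty$ is precisely what you are trying to justify. The fix is immediate and is what the paper does: argue by contradiction via the inverse. If $\phi(\infty)=p_0\in S$, then $\phi^{-1}\in{\rm Sim}_d$ and for any $q\in S\setminus\{p_0\}$ the pair $(p_0,q)$ lies in $S\times S$, so $d(\phi^{-1}(p_0),\phi^{-1}(q))=K_{\phi^{-1}}\,d(p_0,q)$ is finite; but $\phi^{-1}(p_0)=\infty$ and $\phi^{-1}(q)\in S$, so the left side equals $+\infty$. (Equivalently, apply your similarity relation to the legitimate pair $(\phi^{-1}(\infty),p)\in S\times S$.)

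For the reverse inclusion your reasoning is correct and is a mild streamlining of the paper's. The paper takes $\fp=(r,q,\infty,p)$ and uses invariance of \emph{both} $|\X^d_1|$ and $|\X^d_2|$ to obtain $K(\phi,p,q)=K(\phi,p,r)$ and $K(\phi,p,q)=K(\phi,q,r)$, thereby eliminating each variable. You use only one cross-ratio (with $p_4=\infty$) to get that $K_a(x)$ is independent of $x$, and then eliminate the remaining dependence on $a$ by the symmetry $K(\phi,a,a')=K(\phi,a',a)$ of the metric. Both routes are valid and amount to the same idea.
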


\begin{proof}
We show first that ${\rm Sim}_d\subseteq {\rm Stab}_d(\infty)$. Supposing the contrary, assume that there exists a $\phi\in {\rm Sim}_d$ such that $\phi(\infty)=p\in S$. Then for any $q\in S$, $q\neq p$, we would have
$$
+\infty=d\left(\phi^{-1}(p),\phi^{-1}(q)\right)=K_{\phi^{-1}}\cdot d(p,q),
$$
a contradiction.

To show that ${\rm Stab}_d(\infty)\subseteq {\rm Sim}_d$ we only have to prove that any element $\phi\in{\rm Stab}_d(\infty)$  is a similarity.
For this, we consider an arbitrary $\phi\in{\rm Stab}_d(\infty)$ and  we fix two arbitrary points $p,q\in S$. We also  consider the quantity
$$
K(\phi,p,q)=\frac{d\left(\phi(p),\phi(q)\right)}{d(p,q)}.
$$
Now for any $r\in S$, let $\fp=(r,q,\infty,p)$. Relations
$$
|\X^d_1|(\fp)=|\X^d_1|\left(\phi(\fp)\right)
\;\;\text{and}\;\;
|\X^d_2|(\fp)=|\X^d_2|\left(\phi(\fp)\right),
$$
yield
$$
K(\phi,p,q)=K(\phi,p,r)\;\;\text{and}\;\;K(\phi,p,q)=K(\phi,q,r).
$$
From the left equality it follows that $K(\phi,p,q)$ does not depend on $q$. But then the right equality shows that it also does not depend on $p$. Since $p,q$ are  arbitrary, $K=K(\phi)$ 
and the proof is complete.
\end{proof}

In view of Propositions \ref{prop:homoth} and \ref{prop:K} we obtain:

\begin{cor}\label{cor:K} If the similarity homomorphism $K$ is onto, then
\begin{equation*}
{\rm Sim}_d/{\rm Isom}_d\simeq \R^+_*, 
\end{equation*}
where ${\rm Isom}_d$ is the subgroup of $\cM_d$ consisting of isometries and $\R^+_*$ is the multiplicative group of positive real numbers. Thus we have the short exact sequence
\begin{equation}\label{eq:short-ex}
1\to {\rm Isom}_d\to {\rm Sim}_d\to \R^+_*\to 1.
\end{equation}
\end{cor}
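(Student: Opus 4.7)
The plan is to deduce the corollary directly from the first isomorphism theorem applied to the similarity homomorphism $K$ of Proposition \ref{prop:homoth}. Two observations make the argument essentially immediate.

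First, I would identify the kernel of $K$ with ${\rm Isom}_d$. By Proposition \ref{prop:homoth}, $K:{\rm Sim}_d\to\R^+_*$ is a group homomorphism, so $\ker K$ is a normal subgroup of ${\rm Sim}_d$. Unwinding definitions, an element $\phi\in{\rm Sim}_d$ satisfies $K(\phi)=1$ if and only if $d(\phi(p),\phi(q))=d(p,q)$ for all $p,q\in S$, i.e., if and only if $\phi$ is an isometry. Hence $\ker K={\rm Isom}_d$, which shows in particular that ${\rm Isom}_d$ is normal in ${\rm Sim}_d$.

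Second, under the standing assumption that $K$ is surjective, the first isomorphism theorem yields the group isomorphism
$$
{\rm Sim}_d/{\rm Isom}_d\;=\;{\rm Sim}_d/\ker K\;\simeq\;\mathrm{image}(K)\;=\;\R^+_*.
$$
Equivalently, the sequence
$$
1\longrightarrow {\rm Isom}_d\longrightarrow {\rm Sim}_d\xrightarrow{\;K\;} \R^+_*\longrightarrow 1
$$
is exact: injectivity of the left arrow is the inclusion of a subgroup, exactness in the middle is the identification $\ker K={\rm Isom}_d$ from the previous step, and exactness on the right is surjectivity of $K$.

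There is no genuine obstacle in this argument; the entire content has been prepared in Proposition \ref{prop:homoth} (homomorphism property of $K$) together with the tautological identification of the kernel with ${\rm Isom}_d$. I would therefore present the proof in at most a few lines, simply recording the two observations above and invoking the first isomorphism theorem.
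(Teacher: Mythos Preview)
Your proof is correct and is exactly what the paper has in mind: the paper gives no explicit argument, merely stating that the corollary follows ``in view of Propositions \ref{prop:homoth} and \ref{prop:K}'', and your write-up spells out the implicit application of the first isomorphism theorem with $\ker K={\rm Isom}_d$. (Your observation that only Proposition~\ref{prop:homoth} is actually needed is right; the reference to Proposition~\ref{prop:K} is contextual rather than logically essential.)
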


\begin{rem}
Note that if there exists an inverse $L:\R^+_*\to{\rm Sim}_d$ to  the similarity homomorphism  $K$ of the short exact sequence \ref{eq:short-ex} such that $L\circ K$ is the identity homomorphism when restricted to ${\rm Isom}_d$, then by Splitting Lemma, see \cite{H}, we have 
$$
{\rm Sim}_d\simeq \R^+_*\rtimes{\rm Isom}_d,
$$ 
the semidirect product of $R^+_*$ and ${\rm Isom}_d$. This is for instance the case where $S=\partial\bH^n_\K$ and $d=d_\fH$: $L$ maps each positive $\delta$ to $D_\delta$ and $L\circ K=id.$ when restricted to ${\rm Isom}_{d_\fH}$. 
\end{rem}

We close this section with some remarks concerning inversions. In general, an {\it inversion} between two distinct  points $r,r'\in\overline{S}$ is a map $\phi_{r,r'}\in\cM_d$ such that $\phi^2=id.$ and $\phi(r)=r'$. Of course, an inversion may or may not exist in an arbitrary $\cM_d$. But if it does, it satisfies properties listed in the next proposition; for simplicity, we only treat the case where $r=\infty$ and $r'=o$ is an arbitrary point in $S$.

\begin{prop}\label{prop:invol}
 Let $\phi\in\cM_d$ be an inversion between $o$ and $\infty$. Then there exists a $\beta>0$ such that
 \begin{equation*}\label{eq:invol-cond1}
  d(o,p)\cdot d(o, \phi(p))=\beta^2, 
 \end{equation*}
for each $p\in S$ other than $o$, $\infty$. Moreover,
\begin{equation*}\label{eq:invol-cond2}
 d(\phi(p),\phi(q))=\frac{d(p,q)}{d(o,p)\cdot d(o,q)}\cdot \beta^2,
\end{equation*}
for each $p,q\in S$ other than $o$, $\infty$. 
\end{prop}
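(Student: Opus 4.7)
The plan is to exploit the $d$-M\"obius invariance of the cross-ratio $|\X^d|$ under $\phi$, together with the fact that $\phi^2=\mathrm{id}$ and $\phi(\infty)=o$ force $\phi$ to swap $o$ and $\infty$. Two well-chosen quadruples, each containing both $o$ and $\infty$, will yield the two claims. Throughout I use the standard convention (the ``obvious modifications'' of the definition preceding Proposition \ref{prop:abs-cr}) that when exactly one entry of a quadruple equals $\infty$, the two distances to $\infty$ cancel in ratio and one is left with a finite cross-ratio.

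For the first assertion, I would fix arbitrary distinct $p,q\in S\setminus\{o,\infty\}$ and apply $\phi$-invariance to $\fp=(p,q,o,\infty)$, whose image is $\phi(\fp)=(\phi(p),\phi(q),\infty,o)$. A direct computation with the $\infty$-convention gives $|\X^d|(\fp)=d(o,p)/d(o,q)$ and $|\X^d|(\phi(\fp))=d(o,\phi(q))/d(o,\phi(p))$; equality of the two rearranges at once to
$$
d(o,p)\cdot d(o,\phi(p))=d(o,q)\cdot d(o,\phi(q)),
$$
so the product $d(o,p)\cdot d(o,\phi(p))$ is independent of $p$, and I define $\beta>0$ by letting $\beta^2$ be this common value.

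For the second assertion, I would apply the same technique to $\fp=(o,p,q,\infty)$, whose image is $\phi(\fp)=(\infty,\phi(p),\phi(q),o)$. After cancelling infinities one finds $|\X^d|(\fp)=d(o,q)/d(p,q)$ and $|\X^d|(\phi(\fp))=d(o,\phi(p))/d(\phi(p),\phi(q))$. Equating and solving for $d(\phi(p),\phi(q))$ gives
$$
d(\phi(p),\phi(q))=\frac{d(o,\phi(p))\cdot d(p,q)}{d(o,q)},
$$
and substituting $d(o,\phi(p))=\beta^2/d(o,p)$ from the first step produces the stated identity. The only mild difficulty is bookkeeping with the $\infty$-convention and choosing the two quadruples so that $d$-M\"obius invariance extracts exactly these two relations; once the right quadruples are in hand, the argument is purely algebraic and uses no property of $\phi$ beyond its being a $d$-M\"obius involution interchanging $o$ and $\infty$.
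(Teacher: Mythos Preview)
Your argument is correct and is essentially the paper's own proof. The only cosmetic difference is organisational: the paper works with the single quadruple $\fp=(\infty,o,p,q)$ and invokes the two cross-ratio variants $|\X^d_1|$ and $|\X^d_2|$ of Proposition~\ref{prop:abs-cr}, whereas you use two separate quadruples $(p,q,o,\infty)$ and $(o,p,q,\infty)$ with the single cross-ratio $|\X^d|$; by the symmetry relations of Proposition~\ref{prop:abs-cr} these amount to the same computations.
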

\begin{proof}
 We consider two arbitrary points $p,q\in S$ other than $o$ and $\infty$ and the quadruple $\fp=(\infty,o,$ $p,q)$. Since $\phi\in\cM_d$ we have
 $
 |\X^d_1|(\fp)=|\X^d_1|\left(\phi(\fp)\right),
 $
 which yields
 $$
 \frac{d(o,q)}{d(o,p)}=\frac{d(\phi(p),o)}{d(\phi(q),o)}.
 $$
 Thus the quantity $d(o,p)\cdot d(0,\phi(p))$ is constant for each $p$. By setting $p=p_0$ for some arbitrary $p_0$ other that $o$ and $\infty$ and by letting $\beta$ be the positive square root of $d(o,p_0)\cdot d(0,\phi(p_0))$, we obtain the first relation of our proposition.
 
On the other hand, from
 $
 |\X^d_2|(\fp)=|\X^d_2|\left(\phi(\fp)\right),
 $
 we have
$$
 \frac{d(p,q)}{d(o,p)}=\frac{d(\phi(p),\phi(q))}{d(o,\phi(q))}.
 $$
 Therefore,
 $$
 d(\phi(p),\phi(q))=d(o,\phi(q))\cdot\frac{d(p,q)}{d(o,p)}=\frac{d(p,q)}{d(o,p)\cdot d(o,q)}\cdot \beta^2.
 $$
 \end{proof}
 
  Note finally that from Proposition \ref{prop:invol} it follows that inversion $\phi$ leaves invariant the metric sphere $$S^\beta_d=\{p\in S\;|\; d(p,o)=\beta\}.$$

\section{M\"obius Rigidity}\label{sec:proof1}
In this section we prove our results. In Section \ref{sec:prep-lem} we prove an elementary lemma which will lead us to the proof of Theorems  \ref{thm:main1}, \ref{thm:main}, \ref{thm:mainw2} and \ref{thm:pt} which are in Sections \ref{sec:main1}, \ref{sec:main}, \ref{sec:mainw2} and \ref{sec:pt}, respectively.

\subsection{The Preparatory Lemma}\label{sec:prep-lem}
 The following holds.
\begin{lem}\label{lem:step1} We refer to the conditions stated in the introduction. Then conditions {\bf (Sim)} and {\bf (Top)} together imply the following.
\begin{enumerate} 
\item If $D_\delta$ is a dilation, then 
$K(\delta)=\delta^\alpha$, 
where $\alpha\in(0,1]$.
\item The rotational group $\rF(n-1)$ is in ${\rm Isom}_d(\partial\bH^n_\K)$.
\item The group of left translations of $\fH_\K$ is in ${\rm Isom}_d(\partial\bH^n_\K)$. 
 \end{enumerate}
\end{lem}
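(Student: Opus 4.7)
The plan rests on the fact that, by (Sim), each of the parameter families $\delta\mapsto D_\delta$, $U\mapsto S_U$ and $g\mapsto T_g$ described in Section~\ref{sec:hyp} lands inside ${\rm Sim}_d$, so Proposition~\ref{prop:homoth} equips each parameter group with a homomorphism to $\R^+_*$. Condition (Top), combined with the pointwise continuity of the $d_\fH$-actions, makes each of these homomorphisms continuous (in the spirit of Proposition~\ref{prop:cont}: evaluate at fixed test points and note that $d_\fH$-convergence of images yields $d$-convergence).

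I would dispatch parts (1) and (2) first, modulo the upper bound on $\alpha$. For (1), the composition $\delta\mapsto K(D_\delta)$ is a continuous homomorphism $\R^+_*\to\R^+_*$, hence of the form $K(\delta)=\delta^\alpha$ for some $\alpha\in\R$. Fixing any $p\ne o$, the fact that $D_\delta(p)\to\infty$ in the $d_\fH$-topology (and therefore in the $d$-topology by (Top)) as $\delta\to\infty$ forces $K(\delta)\to+\infty$, ruling out $\alpha\le 0$. For (2), ${\rm F}(n-1)$ is compact, so its continuous image under $K$ is a compact subgroup of $\R^+_*$, which can only be $\{1\}$.

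For part (3) the translation group is not compact, so the compactness argument does not apply; this is the main obstacle. The decisive idea I would use is the automorphism identity $D_\delta\,T_g\,D_\delta^{-1}=T_{D_\delta(g)}$, an immediate consequence of $D_\delta$ being a group automorphism of $\fH_\K$. Since $K$ is a homomorphism the conjugation cancels, giving $K(T_g)=K(T_{D_\delta(g)})$ for every $\delta>0$. Letting $\delta\to 0$ we have $D_\delta(g)\to o$ in the $d_\fH$-topology and, by (Top), in the $d$-topology; continuity of $K$ then forces $K(T_g)=K(T_o)=1$.

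Finally, I would upgrade $\alpha\in(0,\infty)$ in part (1) to $\alpha\in(0,1]$ using the triangle inequality now that translations are known to be $d$-isometries. Pick $p=({\bf e}_1,{\bf 0})\in\fH_\K$; since the symplectic form is alternating, $p*p=D_2(p)$, so the isometry $T_{p^{-1}}$ sends the pair $(p,D_2(p))$ to $(o,p)$, giving $d(p,D_2(p))=d(o,p)$. The triangle inequality then yields $K(2)\,d(o,p)=d(o,D_2(p))\le 2\,d(o,p)$, i.e.\ $2^\alpha\le 2$.
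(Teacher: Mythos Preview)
Your proof is correct. Parts (1) and (2), together with the final bound $\alpha\le 1$, are essentially the paper's argument: a continuous character $\R^+_*\to\R^+_*$ is a power, compactness of ${\rm F}(n-1)$ forces $K=1$ there, and the subadditivity $2^\alpha\le 2$ coming from the triangle inequality (your $p*p=D_2(p)$ computation is exactly the paper's special case $\delta_1=\delta_2$).

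Part (3), however, takes a genuinely different route. The paper argues by contradiction: if $K(T)=\lambda\ne 1$, say $\lambda<1$, then the partial sums $d(p,T^\nu p)\le(1+\lambda+\cdots+\lambda^{\nu-1})\,d(p,Tp)$ stay bounded in $d$, whereas $T^\nu p\to\infty$ in the $d_\fH$-topology, contradicting {\bf (Top)}. Your approach instead exploits the structural relation $D_\delta T_g D_\delta^{-1}=T_{D_\delta(g)}$: since $K$ kills conjugation, $K(T_g)$ is constant along the orbit $\{D_\delta(g):\delta>0\}$, and continuity at $g=o$ gives the value $1$. Your argument is cleaner and more conceptual (it really uses that dilations are automorphisms of $\fH_\K$), while the paper's argument is more self-contained in that it only needs $K(T)$ for a single translation $T$ and avoids invoking continuity of $g\mapsto K(T_g)$ at the identity. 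Both are short; yours generalises more readily to any graded nilpotent group with dilations.
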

\begin{proof}
 To prove (1),  we will show that if $D_d$ is a dilation, then there exists an $\alpha\in(0,1]$ such that for each $p,q$,
 \begin{equation*}\label{eq:dil}
  d\left(D_\delta(p),D_\delta(q)\right)=\delta^\alpha\cdot d(p,q).
 \end{equation*}
 For this, set $L=\log K$. Then $L$ is a continuous function satisfying the functional equation
 $$
 L(\delta_1\cdot\delta_2)=L(\delta_1)+L(\delta_2),\quad \text{for all}\;\;\delta_1,\delta_2>0.
 $$
 Hence $L(\delta)=\alpha\log (\delta)$ and therefore
$K(\delta)=\delta^\alpha$ for some non-zero $\alpha$ as desired. 
Now $\alpha$ cannot be negative, for if that was the case then for every $\delta>0$ we would have
$
d\left(o,(\delta,0)\right)=\delta^\alpha\cdot d\left(o,(1,0)\right),$
where we have written  $({\bf e}_1,{\bf 0}_{\Im(\K)})=(1,0)$, $(\delta{\bf e}_1,{\bf 0}_{\Im(\K)})=(\delta,0)$. Since the function
$f(\delta)=$ $d\left(o,(\delta,0)\right)$ is continuous due to the continuity of $d$, by letting $\delta$ tending to $+\infty$ we have a contradiction.
The proof that $\alpha$ is in $(0,1]$ lies after the proof of (3).


To prove (2), we observe that from  condition ({\bf Top}) we have that the group $\rF(n-1)$ is compact in the compact-open topology arising from $d$.  Therefore its $K$-image in the multiplicative group $\R^+_*$ has to be compact, i.e., equal to 1. Thus the continuous $K$ is constant in $\rF(n-1)$ and thus equal to $1$.
 
 
 Finally, to prove (3), let $T$ be a translation and set $\lambda=K(T)>0$. We have to show that $\lambda=1$; asserting the contrary, suppose with no loss of generality that $\lambda<1$. For any $p\in\partial\bH^n_\K$, the sequence $T^{\nu}(p)$ tends to $\infty$ as $\nu\to +\infty$ in the $d_\fH$-topology. On the other hand,
 $$
 d(p,T^{\nu}(p))\le d(p,T(p))+\dots+d(T^{\nu-1}(p),T^{\nu}(p))\le(1+\dots+\lambda^{\nu-1}(p)).
 $$
 But this is bounded in the $d$-topology and this contradicts ({\bf Top}).
 
 We finally prove that $K(\delta)=\delta^\alpha$, $\alpha\in(0,1]$. For this, observe that for every two arbitrary $\delta_1, \delta_2>0$ we have
$$
d\left(o, D_{\delta_1+\delta_2}(1,0)\right)=(\delta_1+\delta_2)^\alpha\cdot d\left(o,(1,0)\right)
$$
and also by invariance of translations and triangle inequality,
\begin{eqnarray*}
 d\left(o, D_{\delta_1+\delta_2}(1,0)\right)&=&d\left(o, (\delta_1+\delta_2,0)\right)\\
 &=&d\left((-\delta_1,0),(\delta_2,0)\right)\\
 &\le&\delta_1^\alpha\cdot d\left((-1,0),o\right)+\delta_2^\alpha\cdot d\left((1,0),o\right)\\
 &=&\delta_1^\alpha\cdot d\left((1,0),o\right)+\delta_2^\alpha\cdot d\left((1,0),o\right)\\
 &=&\left(\delta_1^\alpha+\delta_2^\alpha\right)\cdot d\left(o, (1,0)\right).
\end{eqnarray*}
But also $ d\left(o, D_{\delta_1+\delta_2}(1,0)\right)=\left(\delta_1+\delta_2\right)^\alpha\cdot d\left(o, (1,0)\right)$; therefore, $(\delta_1+\delta_2)^\alpha\le \delta_1^\alpha+\delta_2^\alpha$. By putting $\delta_1=\delta_2=\delta$ we have
$2^\alpha\cdot \delta^\alpha\le 2\cdot\delta^\alpha$, i.e., $2^{1-\alpha}\ge 1$ which can happen only if $\alpha\in(0,1]$.
 This concludes the proof.
\end{proof}

\begin{rem}\label{rem:equid}
 (Notation convention) It is clear from the proof of Lemma \ref{lem:step1} that
 $$
  d\left(o,(\pm{\bf e}_i,{\bf 0}_{\Im(\K)})\right)=d\left(o,({\bf e}_1,{\bf 0}_{\Im(\K)})\right).
 $$
 where ${\bf e}_i$ is the $i$-coordinate vector of $\K^{n-1}$. All these quantities shall be hereafter denoted by $d\left((1,0),o\right)$. Also
 $$
 d\left(o,({\bf 0}_{\K^{n-1}},\pm{\bf f}_i)\right)=d\left(o,({\bf 0}_{\K^{n-1}},{\bf f}_1)\right).
 $$
 where ${\bf f}_i$ is the $i$-unit vector of $\Im(\K)$. This is is true because according to Proposition \ref{prop:K}, $J$ is in ${\rm Sim}_d$; in particular,  $K(J)=1$. All these quantities shall be denoted by $d\left((0,1),o\right)$. 
\end{rem}

\subsection{Proof of Theorem \ref{thm:main1}}\label{sec:main1} 
For the case $\K=\R$ (case (1) of the theorem), let $\zeta,\zeta'\in\R^{n-1}$. We have 
\begin{eqnarray*}
 d\left(\zeta,\zeta'\right)&=& d\left(\zeta-\zeta',0\right)\\
 &=& d\left(\|\zeta-\zeta'\|\cdot\mu,0\right),\quad \mu\in{\rm O}(n-1),\\
 &=& d(0,1)\cdot\|\zeta-\zeta'\|^\alpha=d(0,1)\cdot d_\fH^\alpha(\zeta,\zeta').
\end{eqnarray*}
Here, the penultimate equation follows from the transitive action of ${\rm SO}(n-1)$ on $S^{n-1}$: We may map any $r\in S^{n-1}$ to $1={\bf e}_1$ via an element of ${\rm SO}(n-1)$. 

For $\K\neq\R$ (case (2) of the theorem), let  $$\beta_1=\max\left\{d\left((1,0),o\right), d\left((0,1),o\right)\right\},\;\;
\beta_2=\min\left\{d\left((1,0),o\right), d\left((0,1),o\right)\right\}.$$
Since for each $p=(\zeta,v)\in\fH_\K$:
\begin{eqnarray}
 &&\label{eq:dil1}
 d\left((\zeta,0),o\right)=d\left((1,0),o\right)\cdot\|\zeta\|^\alpha,\\
 &&\label{eq:dil2}
 d\left((0,v),o\right)=d\left((0,1),o\right)\cdot |v|^{\alpha/2},
\end{eqnarray}
from \ref{eq:dil1} and \ref{eq:dil2} we obtain the inequalities:
\begin{eqnarray*}
 &&\label{ineq:dil1}
 \beta_2\cdot \|\zeta\|^\alpha\le d\left((\zeta,0),o\right)\le\beta_1\cdot \|\zeta\|^\alpha,\\
 &&\label{ineq:dil2}
 \beta_2\cdot |v|^{\alpha/2}\le d\left((0,v),o\right)\le\beta_1\cdot |v|^{\alpha/2}.
\end{eqnarray*}
Therefore,
\begin{equation*}
 \beta_2\cdot d^\alpha_\fH(o,p)\le \left(d^{4/\alpha}\left((\zeta,0),o\right)+d^{4/\alpha}\left((0,v),o\right)\right)^{\alpha/4}\le\beta_1\cdot d^\alpha_\fH(o,p).
\end{equation*}
By triangle inequality and $d$-invariance of  translations we have
$$
d(p,o)\le d\left((\zeta,0),o\right)+d\left((0,v),o\right).
$$
We apply H\"older's inequality  with exponent $4/\alpha$   to obtain
\begin{eqnarray*}
 d(p,o)&\le&2^{(4-\alpha)/4}\cdot\left(d^{4/\alpha}\left((\zeta,0),o\right)+d^{4/\alpha}\left((0,v),o\right)\right)^{\alpha/4}\\
 &\le&\beta_1\cdot 2^{(4-\alpha)/4}\cdot\left(d_\fH^4\left((\zeta,0),o\right)+d_\fH^4\left((0,v),o\right)\right)^{\alpha/4}\\
 &=&\beta_1\cdot 2^{(4-\alpha)/4}\cdot d^\alpha_\fH(p,o),
\end{eqnarray*}
and Theorem \ref{thm:main1} follows.\qed

We wish to address  at one important issue at this point. As we have underlined  in the introduction, condition {\bf ($\alpha$-H\"ol)} is rather inadequate to describe in full the nature of metrics which satisfy {\bf (Sim)} and {\bf (Top)}; there might exist metrics which satisfy {\bf ($\alpha$-H\"ol)} on the one hand but on the other, their nature might be entirely different from that of $d_\fH$. A concrete example which illustrates this matter is that  of  the Carnot-Carath\'eodory metric $d_{cc}$ (for details about $d_{cc}$, see for instance \cite{CDPT}); it suffices only to consider the case $\K=\C$, $n=2$. Certainly, since ${\rm Sim}_{d_{cc}}={\rm Sim}_{d_\fH}$, $d_{cc}$ automatically  satisfies  conditions {\bf (Sim)} and {\bf (Top)} of Theorem \ref{thm:main1} (but satisfies neither {\bf (Inv)} nor {\bf (G)}). Since $\alpha=1=\beta_1$,  we have from Theorem \ref{thm:main1} that for each $p$,
$$
d_{cc}(p,o)\le 2^{3/4}\cdot d_{\fH}(p,0).
$$
The reader is invited to compare this inequality to the well known estimate
 $$
 \pi^{-1/2}\cdot d_\fH(p,o)\le d_{cc}(p,o)\le \cdot d_\fH(p,o).
 $$

\subsection{Proof of Theorem \ref{thm:main}}\label{sec:main}
The following lemma is an immediate corollary of Proposition \ref{prop:invol}; it will be needed for the proof of Theorem \ref{thm:main}.

\begin{lem}\label{lem:confinv}
 If inversion  $I$ is in $\cM_d(\fH_\K)$, then
 \begin{equation*}\label{eq:inv-cond1}
  d(o,p)\cdot d(o, I(p))=d(o,p_0)\cdot d\left(o,I(p_0)\right),
 \end{equation*}
 where $p_0$ is any point other than $o,\infty$ and for each $p\in\fH_\K$ other than $o$, $\infty$. Moreover
\begin{equation*}\label{eq:inv-cond2}
 d(I(p),I(q))=\frac{d(p,q)}{d(o,p)\cdot d(o,q)}\cdot d(o,p_0)\cdot d\left(o,I(p_0)\right),
\end{equation*}
for each $p,q\in\fH_\K$ other than $o$, $\infty$. 
\end{lem}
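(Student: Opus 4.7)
The plan is very short because, as the authors indicate, Lemma \ref{lem:confinv} is essentially a specialization of Proposition \ref{prop:invol} with the constant $\beta$ identified via evaluation at an arbitrary reference point $p_0$.

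First I would verify that the Kor\'anyi inversion $I$ of $\partial\bH^n_\K$ satisfies the hypotheses of Proposition \ref{prop:invol} with $r=\infty$ and $r'=o$. From Section \ref{sec:hyp} we already have $I(o)=\infty$, $I(\infty)=o$, and $I$ is an involution of $\partial\bH^n_\K$, so $I^2=\mathrm{id}$. The additional assumption of the lemma is precisely $I\in\cM_d(\fH_\K)$, which is what is needed to apply Proposition \ref{prop:invol} with $\phi=I$.

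Next, Proposition \ref{prop:invol} produces a constant $\beta>0$ (depending only on $d$ and $I$) such that
\begin{equation*}
  d(o,p)\cdot d(o,I(p))=\beta^{2}\qquad\text{for every }p\in\fH_\K\setminus\{o\},
\end{equation*}
and
\begin{equation*}
  d(I(p),I(q))=\frac{d(p,q)}{d(o,p)\cdot d(o,q)}\cdot \beta^{2}\qquad\text{for every }p,q\in\fH_\K\setminus\{o\}.
\end{equation*}
The only step remaining is to reinterpret $\beta^{2}$. Since the first identity is valid for every admissible point, in particular for the chosen reference point $p_0\neq o,\infty$, one obtains
\begin{equation*}
  \beta^{2}=d(o,p_0)\cdot d\bigl(o,I(p_0)\bigr).
\end{equation*}
Substituting this expression for $\beta^{2}$ into the two displays above yields verbatim the two identities claimed in Lemma \ref{lem:confinv}.

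There is no real obstacle here: the content of the lemma is entirely contained in Proposition \ref{prop:invol}, and the only thing to be said is that $I$ qualifies as an inversion between $o$ and $\infty$ in the abstract sense of Section \ref{sec:Mob}, and that the abstract constant $\beta$ may be computed at any convenient point $p_0$.
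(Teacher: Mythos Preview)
Your proposal is correct and follows exactly the approach the paper intends: the authors state that Lemma~\ref{lem:confinv} is an immediate corollary of Proposition~\ref{prop:invol}, and your argument spells out precisely that specialization, including the identification of $\beta^2$ via evaluation at $p_0$ (which is already implicit in the proof of Proposition~\ref{prop:invol}).
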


We proceed now to the proof of Theorem \ref{thm:main}. Recall that $J$ is in ${\rm Sim}_d$ and $K(J)=1$. By choosing $p_0=(1,0)$ which we shall denote simply with 1, the formulae in the statement of the proposition also read as:
$$
d(o,p)\cdot d(o, I(p))=d^2(o,1),\;\; d(I(p),I(q))=\frac{d(p,q)}{d(o,p)}\cdot d(o,q)\cdot d^2(o,1).
$$
Let now $p=(\zeta,v)\in\fH_\K$. Then by setting $\cA(p)=-\|\zeta\|^2+v$ we have
\begin{eqnarray*}
 d\left(I(p),o)\right)&=&d\left(\left(\zeta\cdot\frac{\cA(\overline{p})}{|\cA(p)|^2},\frac{\overline{v}}{|\cA(p)|^2}\right),o\right)\\
 &=&d\left(D_{|\cA(p)|^{-1}}\left(\zeta\cdot\frac{\cA(\overline{p})}{|\cA(p)|},\overline{v}\right),o\right)\\
&=&|\cA(p)|^{-\alpha}\cdot d\left(\left(\frac{ \cA(p)}{|\cA(p)|}\cdot\overline{\zeta},v\right),o\right)\\
&=&|\cA(p)|^{-\alpha}\cdot d\left(\left(-\overline{\zeta},v\right),o\right)\\
&=&|\cA(p)|^{-\alpha}\cdot d\left(\left(-\zeta,-v\right),o\right)\\
&=&|\cA(p)|^{-\alpha}\cdot d(p^{-1},o)\\
&=&|\cA(p)|^{-\alpha}\cdot d\left(T_p (p^{-1}),o\right)\\
&=&|\cA(p)|^{-\alpha}\cdot d\left(o,p\right).
\end{eqnarray*}
This gives
$$
d\left(I(p),o)\right)=\frac{d^2(o,1)}{d(o,p)}=|\cA(p)|^{-\alpha}\cdot d\left(o,p\right),
$$
and by Lemma \ref{lem:confinv} we conclude
$$
d^2(o,p)=d^2(o,1)\cdot |\cA(p)|^{\alpha}=d^2(o,1)\cdot d_\fH^{2\alpha}(o,p),
$$
which proves Theorem \ref{thm:main}.\qed

Note that implicit in the proof lies the following formula for the inversion $I$: For each $p$,
$$
I(p)=\left(D_{|\cA(p)|^{-1}}\circ J\circ R_{-\frac{\cA(p)}{|\cA(p)|}}\circ J\right)\left(p^{-1}\right).
$$ 
This formula only dictates the manner that inversion $I$ is defined; it cannot be regarded as formula which expresses $I$ as a compound of similarities, due to the presence of $p^{-1}$.

\subsection{Proof of Theorem \ref{thm:mainw2}}\label{sec:mainw2}

Suppose now that {\bf (G)} holds, that is,
\begin{equation*}\label{eq:pyth1}
d(o,p)=\left(d^{4/\alpha}\left((\zeta,0),o\right)+d^{4/\alpha}\left((0,v),o\right)\right)^{\alpha/4}.
\end{equation*}
Then $\beta_2\cdot d^\alpha_\fH(o,p)\le d(o,p)\le \beta_1\cdot d^\alpha_\fH(o,p)$. 
From $d$-invariance and $d_\fH$-invariance of translations we have:
$$
\beta_2\cdot d_\fH^\alpha(p,q)\le d(p,q)\le \beta_1\cdot  d_\fH^\alpha(p,q), 
$$
therefore $d$ and $d_\fH^\alpha$ are metrically equivalent. But there exists also a $\beta\ge 1$ depending on $\beta_1,\beta_2$ 
 such that
$$
\frac{1}{\beta}\cdot d_\fH^\alpha(p,q)\le d(p,q)\le \beta\cdot d_\fH^\alpha(p,q).
$$
If  {\bf (Eq)} holds, then $\beta_1=\beta_2=\beta$ and  $d=\beta\cdot d_\fH^\alpha$. In this manner we have proved Theorem \ref{thm:mainw2}.
\qed

We remark here that condition {\bf (biLip)} is equivalent to that the identity map from $(\partial\bH^n_\K, d)$ to $(\partial\bH^n_\K, d_\fH)$ is bi-Lipschitz. 

\medskip

Finally, we prove:

\begin{prop}\label{prop:p-l}
Suppose that condition {\bf (Sim)} holds. Then conditions {\bf (G)} is equivalent to the following:
\begin{enumerate}
\item [{{\bf (P-L)}}] For $\alpha\in (0,1]$ and for each $p,q\in\fH$,
\begin{eqnarray*}
\left|p*q\right|^{4/\alpha}+ \left|p^{-1}*q\right|^{4/\alpha}+\left|p*q^{-1}\right|^{4/\alpha}+\left|p^{-1}*q^{-1}\right|^{4/\alpha}&=&\\
2\left(\left|\Pi_{\K^{n-1}}(p*q)\right|^{4/\alpha}+\left|\Pi_{\K^{n-1}}\left(p^{-1}*q\right)\right|^{4/\alpha}\right)&+&\\
\left|\Pi_{\Im(\K)}(p*q)\right|^{4/\alpha}+ \left|\Pi_{\Im(\K)}\left(p^{-1}*q\right)\right|^{4/\alpha}+\left|\Pi_{\Im(\K)}\left(p*q^{-1}\right)\right|^{4/\alpha}+\left|\Pi_{\Im(\K)}\left(p^{-1}*q^{-1}\right)\right|^{4/\alpha},
\end{eqnarray*}
where $\Pi_{\K^{n-1}}$ and $\Pi_{\Im(\K)}$ are projections of $\fH$ to $\K^{n-1}$ and $\Im(\K)$, respectively. 
\end{enumerate} 
\end{prop}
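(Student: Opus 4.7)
The plan is to prove the two implications separately, exploiting that under \textbf{(Sim)} the group of left translations of $\fH_\K$ acts as $d$-isometries (Lemma~\ref{lem:step1}(3)). This yields the basic symmetry
$$
|p^{-1}| \;=\; d(o,p^{-1}) \;=\; d(p, p*p^{-1}) \;=\; d(p,o) \;=\; |p|
$$
for every $p\in\fH_\K$, which will be used repeatedly alongside \textbf{(G)}.

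For the direction \textbf{(G)} $\Rightarrow$ \textbf{(P-L)}, I would apply \textbf{(G)} to each of the four points $p*q$, $p^{-1}*q$, $p*q^{-1}$, $p^{-1}*q^{-1}$ and add the four resulting identities. Writing $p=(\zeta,v)$ and $q=(\eta,w)$, the $\K^{n-1}$-projections of these four points are $\zeta+\eta$, $\eta-\zeta$, $\zeta-\eta$, $-\zeta-\eta$ respectively. Because $(-\zeta-\eta,\,0)=(\zeta+\eta,\,0)^{-1}$ and $(\zeta-\eta,\,0)=(\eta-\zeta,\,0)^{-1}$, the symmetry $|r^{-1}|=|r|$ collapses the four $\K^{n-1}$-terms into
$$
2\bigl(|\Pi_{\K^{n-1}}(p*q)|^{4/\alpha}+|\Pi_{\K^{n-1}}(p^{-1}*q)|^{4/\alpha}\bigr),
$$
precisely the first summand on the right-hand side of \textbf{(P-L)}. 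The four $\Im(\K)$-terms do \emph{not} pair up in this way: the symplectic correction $2\omega(\zeta,\eta)$ breaks the sign symmetry of $v\pm w$, so they survive individually as the remaining four summands on the right-hand side of \textbf{(P-L)}.

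For \textbf{(P-L)} $\Rightarrow$ \textbf{(G)}, I would simply specialise \textbf{(P-L)} to $p=q$. Then $p^{-1}*q=p*q^{-1}=o$, so the corresponding terms on the left vanish, together with the two middle $\Im(\K)$-terms on the right. Antisymmetry of $\omega$ gives $p^2=p*p=(2\zeta,2v)$, hence $p^{-2}=(p^2)^{-1}$; applying $|r^{-1}|=|r|$ to the terms arising from $p^{-2}$, the identity reduces to
$$
2|p^2|^{4/\alpha}\;=\;2|\Pi_{\K^{n-1}}(p^2)|^{4/\alpha}+2|\Pi_{\Im(\K)}(p^2)|^{4/\alpha}.
$$
The squaring map $p\mapsto p^2$ is surjective on $\fH_\K$ --- any $(a,b)\in\fH_\K$ is $(a/2,b/2)^2$ --- so this gives \textbf{(G)} at every point, completing the equivalence. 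I expect no genuine obstacle: the only care-points are the Heisenberg signs and the observation that negation on the $\K^{n-1}$-factor, while not an abstract isometry on its own, coincides with $r\mapsto r^{-1}$ on elements of the form $(r,0)$, which is exactly the fact that \textbf{(Sim)} supplies for free.
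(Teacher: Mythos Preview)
Your argument is correct and, for the direction \textbf{(G)} $\Rightarrow$ \textbf{(P-L)}, matches the paper's proof exactly: apply \textbf{(G)} to the four products and sum, collapsing the $\K^{n-1}$-terms pairwise via $|r^{-1}|=|r|$.

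For \textbf{(P-L)} $\Rightarrow$ \textbf{(G)} there is a minor difference. You specialise to $p=q$, obtain \textbf{(G)} at $p^2$, and then invoke surjectivity of the squaring map $p\mapsto p^{2}=(2\zeta,2v)$. The paper instead sets $q=o$: then all four products equal $p$ or $p^{-1}$, and after using $|p^{-1}|=|p|$ (and its analogues on the projections) both sides carry a common factor $4$, giving \textbf{(G)} at $p$ directly. Your route is perfectly valid; the paper's specialisation is just one step shorter since it avoids the surjectivity observation.
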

\begin{proof}
 To show that {\bf (P-L)} implies {\bf (G)}, just set $q=o$ and use the fact that $\left|p^{-1}\right|=|p|$, due to $d$-invariance of translations which follows from {\bf (Sim)} (cf. Lemma \ref{lem:step1}):
 $$
d\left(o,p^{-1}\right)=d\left(T_p(o),T_p\left(p^{-1}\right)\right)=d(o,p).
$$
To show that {\bf (G)} implies {\bf (P-L)}, just apply {\bf (G)} for the points $p*q$, $p^{-1}*q$, $p*q^{-1}$ and $p^{-1}*q^{-1}$.
\end{proof}

\subsection{Proof of Theorem \ref{thm:pt}}\label{sec:pt}

Assuming condition {\bf ($\alpha$-Met)} holds, we have for each $p,q,r,s$ points in $\partial\bH^n_\K$ that
\begin{eqnarray*}
d^\alpha_\fH(p,r)\cdot d^\alpha_\fH(q,s)&\le&\left(d_\fH(p,q)\cdot d_\fH(r,s)+d_\fH(p,s)\cdot d_\fH(r,q)\right)^{\alpha}\\
&\le& d^\alpha_\fH(p,q)\cdot d^\alpha_\fH(r,s)+d^\alpha_\fH(p,s)\cdot d^\alpha_\fH(r,q),
\end{eqnarray*}
where the first inequality follows from the Ptolemaean property of $d_\fH$ and the second inequality holds because $\alpha\le 1$. Thus $d$ satisfies {\bf (Ptol)}.

Assume now that  {\bf ($\alpha$-Met)} and {\bf (Circ)} hold. Pick any $p,q,r,s$ lying in the Ptolemaean circle $\sigma$ and suppose with no loss of generality that $p$ and $s$ separate $q$ and $r$. We may also normalise so that $q=o$ and $r=\infty$. Then
\begin{eqnarray*}
 d_\fH^\alpha(s,o)+ d_\fH^\alpha(p,0)= d_\fH^\alpha(p,s).
\end{eqnarray*}
Therefore,
$$
d_\fH(p,s)=\left(d_\fH^\alpha(s,o)+ d_\fH^\alpha(p,0)\right)^{1/\alpha}\ge d_\fH(s,o)+ d_\fH(p,0),
$$
since $1/\alpha\ge 1$. But this contradicts triangle inequality unless $\alpha=1$. Moreover, we have that then $\sigma$ is an $\R$-circle and the proof is complete. \qed

\end{document}